\documentclass[12pt,a4paper, oneside,reqno]{article}
\usepackage[utf8]{inputenc}
\usepackage[english]{babel}
\usepackage[left=1in,right=1in,top=1in,bottom=1in]{geometry}
\usepackage{amsfonts,amssymb,amsmath,amsthm}

\usepackage[symbol]{footmisc} 
\usepackage{longtable}
\usepackage{tasks}
\usepackage{enumerate}
\usepackage{cite}
\usepackage{hyperref}
\usepackage{verbatim}
\usepackage{tkz-graph}
\usepackage{fancyhdr} 
\theoremstyle{plain}
\newtheorem{theorem}{Theorem}
\newtheorem{lemma}[theorem]{Lemma}
\newtheorem{proposition}[theorem]{Proposition}
\newtheorem{remark}[theorem]{Remark}

\newtheorem{definition}[theorem]{Definition}
\newtheorem{corollary}[theorem]{Corollary}
\usetikzlibrary{arrows}
\newtheorem*{maintheorem}{Main Theorem}

 \usepackage{fouriernc} 

\newcommand{\B}{\mathcal{B}}
\newcommand{\Z}{\mathbb{Z}}
\newcommand{\C}{\mathbb{C}}
\newcommand{\QDer}{\mathfrak{QDer}}
\newcommand{\GDer}{\mathfrak{GDer}}
\newcommand{\Der}{\mathfrak{Der}}
\newcommand{\Inn}{\mathrm{Inn}}
\newcommand{\Ann}{\mathrm{Ann}}
\newcommand{\AnnQ}{\mathrm{Ann}^{Q}}
\newcommand{\End}{\mathrm{End}}
\newcommand{\ad}{\mathrm{ad}}
\newcommand{\id}{\mathrm{id}}
\newcommand{\dhat}{\hat{\delta}}
\newcommand{\ac}[2]{a^{#1}_{#2}}
\newcommand{\acp}[2]{a^{\prime\,#1}_{#2}}

\begin{document}

\noindent{\Large 
Quasi-derivations of Block Lie algebras $\mathcal{B}(q)$}
 \footnote{The authors thank the SRMC (Sino-Russian Mathematics Center at Peking University, Beijing, China) for its hospitality and excellent working conditions, where some parts of this work were done. 
The work is supported by FCT 2023.08031.CEECIND and UID/00212/2025.}  
\footnote{The original concept, theoretical framework, and primary results were developed by I.K., A.K., and Z.S.; 
while Aristotle (https://aristotle.harmonic.fun/) acted as a non-human research assistant, assisting in error verification and proof simplification.}
  \bigskip
 
	 \bigskip

\begin{center}	
	{\bf
		Ivan Kaygorodov\footnote{CMA-UBI, University of  Beira Interior,  Portugal;  \    kaygorodov.ivan@gmail.com},
   Abror Khudoyberdiyev\footnote{V.I.Romanovskiy Institute of Mathematics, Uzbekistan Academy of Sciences; National University of Uzbekistan, Uzbekistan; \ 
khabror@mail.ru} \&
Zarina Shermatova\footnote{V.I.Romanovskiy Institute of Mathematics, Uzbekistan Academy of Sciences; Kimyo International University in Tashkent, Uzbekistan;  \ 
zarina\_shermatova91@mail.ru}}   
\end{center}

 \bigskip

   \bigskip

\noindent {\bf Abstract.}
{\it 
We compute quasi-derivations of Block Lie algebras $\B(q)$. In particular, we prove that $\B(q)$ has various nontrivial quasi-derivations, i.e., quasi-derivations different from derivations and $\delta$-derivations. As corollaries, we obtain a complete description of $\delta$-derivations, $\delta$-biderivations, and transposed $\delta$-Poisson structures on $\B(q)$ for $\delta\notin\{0;1\}$. In particular, for $\delta\notin\{0;1\}$, $\B(q)$ up to isomorphism admits only one nontrivial transposed $\delta$-Poisson structure if $q\in\Z$, and $\B(q)$ does not admit nontrivial transposed $\delta$-Poisson structures if $q\notin\Z$.
}
 \bigskip

  \bigskip
\noindent {\bf Keywords}: 
{\it   Lie algebra, 
Block algebra,  
generalized derivation.}

 \bigskip
 
 \bigskip

\noindent {\bf MSC2020}: 17B68 (primary); 17A30, 17B63, 17B40, 17B70 (secondary).

  \bigskip \bigskip

\tableofcontents 
 \bigskip
  \bigskip \bigskip
\newpage

\section*{Introduction}
The notion of reverse derivations, as a particular case of Jordan derivations, was introduced in a paper by Herstein in 1957 \cite{12}. In the anticommutative case, the notion of reverse derivations gives antiderivations, which were actively studied by Filippov \cite{9}. For example, he proved that any prime Lie algebra with a nonzero antiderivation satisfies the standard identity of degree 5. Later, Filippov generalized the notion of derivations and antiderivations to $\delta$-derivations \cite{10}, which have been actively studied for algebras of various varieties (see \cite{14,30} and references therein). Each nontrivial $\delta$-derivation of a Lie algebra gives a Hom-Lie structure \cite{10}. $\frac{1}{2}$-derivations play an important role in the definition of transposed Poisson algebras \cite{2,8,13,16,17,22,24}. Similarly, the notion of $\delta$-derivations plays an important role in the definition of transposed $\delta$-Poisson algebras \cite{1,6}. A notion generalizing $\delta$-derivations was introduced by Leger and Luks under the name of generalized derivations in 2000 \cite{19}. Generalized derivations of triangular, alternative, Jordan, and $n$-ary algebras were studied by Jim\'enez-Gestal and P\'erez-Izquierdo; Shestakov; Mart\'{\i}n Barquero, Mart\'{\i}n Gonz\'alez, S\'anchez-Ortega, and Vandeyar; and Kaygorodov and Popov (see \cite{14} and references therein). On the other hand, generalized derivations of Lie algebras provide a particular case of near-derivations defined by Bre\v{s}ar, and generalized derivations are closely related to Jordan $\{g,h\}$-derivations introduced by Bre\v{s}ar. Generalized derivations have appeared in various contexts. For instance, they were used by Andruszkiewicz, Brzezi\'nski, and Radziszewski to construct Lie algebras and by Jim\'enez-Gestal and P\'erez-Izquierdo in the study of real division algebras. Burde and Dekimpe used generalized derivations of Lie algebras to study post-Lie algebra structures. Recently, Kaygorodov, Khudoyberdiyev, and Shermatova described all generalized derivations of the Witt algebra and of the algebras $W(a,b)$ \cite{18}. As was proved there, the algebras $W(a,b)$ give nontrivial examples of transposed $\delta$-Poisson structures for an arbitrary $\delta\in\C$. These ideas were extended to the deformed Heisenberg--Virasoro algebra \cite{21} and the $n$-th Schr\"odinger superalgebra \cite{26}. The results obtained in \cite{18,21,26} motivated the search for new Lie algebras that admit nontrivial transposed $\delta$-Poisson structures.

The Witt algebra and its generalizations are among the most popular infinite-dimensional Lie algebras. The Witt algebra plays a foundational role in the construction of other important algebraic structures (see, for example, \cite{3,8,18}). One of the most interesting generalizations of the Witt algebra was introduced by Block in 1958 \cite{4}. Since then, Block-type algebras have been of considerable research interest. Osborn, Zhao, Xu, and Xia constructed new examples of Block-type algebras in \cite{23,27,29}. \DJ okovi\'c and Zhao described derivations and the second cohomology of generalized Block algebras \cite{7}. Modules and representations over various classes of Block-type algebras, Block-type superalgebras, and Block-type conformal algebras and superalgebras were studied in papers by Chen, Guo, Zhao, Wang, Xia, Yuan, Liu, and Su (see, for example, \cite{5,11,25} and references therein). Various linear and bilinear maps of Block-type algebras and superalgebras have also been studied: Liu, Guo, and Zhao described biderivations of Block Lie algebras in 2018 \cite{20}; Kaygorodov and Khrypchenko described transposed Poisson structures on Block Lie algebras and superalgebras in 2023 \cite{15}; Huang, Gao, and Sun described transposed Poisson structures on the twisted $N=1$ Block-Lie superalgebra in 2025 \cite{13}. The present paper is a continuation of the cited works and is devoted to the complete description of quasi-derivations of the Block algebras $\B(q)$, which generalizes some previously obtained results in \cite{15}.

\section{Preliminaries}
In this section, we recall some definitions and known results for studying generalized derivations and quasi-derivations of Lie algebras. All algebras and vector spaces are considered over the complex field. Let $S$ be a set of vectors; we denote the linear span of $S$ by $\langle S\rangle$. By $\delta_{i,j}$ we denote the standard Kronecker symbol and $\dhat_{i,j}:=1-\delta_{i,j}$.

\subsection{Main definitions}
\begin{definition}
Let $\mathfrak{L}$ be a Lie algebra. The linear map $D:\mathfrak{L}\to\mathfrak{L}$ is called a derivation if
\[ D[x,y]=[D(x),y]+[x,D(y)]. \]
\end{definition}
As usual, we denote the set of all derivations of $\mathfrak{L}$ by $\Der(\mathfrak{L})$. Obviously, $\Der(\mathfrak{L})$ is a Lie subalgebra of the general linear algebra $\mathfrak{gl}(\mathfrak{L})$. There are several generalizations of the notion of a derivation. In this paper, we consider generalized derivations and quasi-derivations of Lie algebras as defined by Leger and Luks in \cite{19}.

\begin{definition}
Let $\mathfrak{L}$ be a Lie algebra and $f:\mathfrak{L}\to\mathfrak{L}$ be a linear map. If there exist linear maps $f',f'':\mathfrak{L}\to\mathfrak{L}$ such that
\[ [f(x),y]+[x,f'(y)]=f''[x,y], \]
then $f$ is called a generalized derivation. If $f=f'$, then $f$ is called a quasi-derivation. If $f=f'=\delta f''$, we say that $f''$ is a $\delta$-derivation.
\end{definition}
By $\GDer(\mathfrak{L})$ we will denote the set of all generalized derivations of $\mathfrak{L}$; by $\QDer(\mathfrak{L})$ the set of all quasi-derivations of $\mathfrak{L}$; by $\Der_{\delta}(\mathfrak{L})$ the set of all $\delta$-derivations (for a fixed $\delta$) of $\mathfrak{L}$; by $\Der_{[\delta]}(\mathfrak{L})$ the linear span of all $\delta$-derivations with $\delta\neq0$ of $\mathfrak{L}$; by $\Inn(\mathfrak{L})$ the set of all inner derivations of $\mathfrak{L}$. Obviously, $\Inn(\mathfrak{L})$, $\Der(\mathfrak{L})$, $\QDer(\mathfrak{L})$, and $\GDer(\mathfrak{L})$ are Lie subalgebras of $\mathfrak{gl}(\mathfrak{L})$, $\Der_{[\delta]}(\mathfrak{L})$ is a subspace of $\mathfrak{gl}(\mathfrak{L})$, and
\[ \Inn(\mathfrak{L})\subseteq\Der(\mathfrak{L})\subseteq\Der_{[\delta]}(\mathfrak{L})\subseteq\QDer(\mathfrak{L})\subseteq\GDer(\mathfrak{L})\subseteq\mathfrak{gl}(\mathfrak{L}). \]
(If $\varphi$ is a $\delta$-derivation with $\delta\neq0$, then $\delta\varphi$ is a quasi-derivation with related map $\varphi$. A $0$-derivation is just a linear map vanishing on $[\mathfrak{L},\mathfrak{L}]$; in general it is not a quasi-derivation, which is why $\delta=0$ is excluded above. Also, the set-theoretic union of all $\Der_\delta(\mathfrak{L})$ is not closed under addition in general.)

\begin{definition}[see \cite{1,6}]
Let $\delta$ be a fixed complex number. An algebra $(\mathfrak{P},\cdot,[\cdot,\cdot])$ is defined to be a transposed $\delta$-Poisson algebra if $(\mathfrak{P},\cdot)$ is a commutative associative algebra, $(\mathfrak{P},[\cdot,\cdot])$ is a Lie algebra, and the following identity holds:
\begin{equation}\label{tdp}
x\cdot[y,z]=\delta\big([x\cdot y,z]+[y,x\cdot z]\big).
\end{equation}
A Lie algebra $(\mathfrak{L},[\cdot,\cdot])$ has a nontrivial transposed $\delta$-Poisson structure $\cdot$ if $(\mathfrak{L},\cdot,[\cdot,\cdot])$ is a transposed $\delta$-Poisson algebra with both multiplications nonzero.
\end{definition}
The description of $\delta$-derivations (or, more generally, $\delta$-biderivations) gives a constructive way to obtain a classification of transposed $\delta$-Poisson structures on a given Lie algebra. Namely, the following obvious statement is a generalization of a result from \cite{8}.

\begin{lemma}\label{lem4}
Let $(\mathfrak{L},\cdot,[\cdot,\cdot])$ be a transposed $\delta$-Poisson algebra and $z$ an arbitrary element of $\mathfrak{L}$. Then the right multiplication $R_z$ in the associative commutative algebra $(\mathfrak{L},\cdot)$ is a $\delta$-derivation of the Lie algebra $(\mathfrak{L},[\cdot,\cdot])$. In particular, if $\Der_{\delta}(\mathfrak{L})\subseteq\langle\id\rangle$, then there are no nontrivial transposed $\delta$-Poisson structures on $\mathfrak{L}$.
\end{lemma}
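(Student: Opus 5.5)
The first assertion is a direct rewriting of the compatibility identity (\ref{tdp}), and the plan is to read it that way. Fix $z\in\mathfrak{L}$ and write $R_z(x)=x\cdot z$. Commutativity gives $x\cdot[y,w]=[y,w]\cdot x$. Substituting $x=z$ into (\ref{tdp}) and renaming variables gives
\[
R_z[y,w]=z\cdot[y,w]=\delta\big([z\cdot y,w]+[y,z\cdot w]\big)=\delta\big([R_z(y),w]+[y,R_z(w)]\big),
\]
where we also use $z\cdot y=y\cdot z=R_z(y)$. This is exactly the statement that $R_z$ is a $\delta$-derivation of $(\mathfrak{L},[\cdot,\cdot])$, in the sense of the definition above with $f=f'=\delta R_z$ and $f''=R_z$. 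Note that associativity is not needed here, only commutativity.

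For the second assertion, assume $\Der_\delta(\mathfrak{L})\subseteq\langle\id\rangle$. By the first part, for every $z$ there is a scalar $\alpha(z)$ with $R_z=\alpha(z)\,\id$; the map $\alpha:\mathfrak{L}\to\C$ is linear. Then for all $x,z$ we get $x\cdot z=\alpha(z)x$. Commutativity gives $\alpha(z)x=z\cdot x=\alpha(x)z$ for all $x,z$. If $\dim\mathfrak{L}\ge2$, choose $x,z$ linearly independent; this forces $\alpha(z)=\alpha(x)=0$. Any nonzero vector lies in such an independent pair, so $\alpha\equiv0$, and hence the product $\cdot$ is zero.

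It remains to treat $\dim\mathfrak{L}\le1$. There the Lie bracket is zero, so the structure is trivial anyway, since a nontrivial structure requires both multiplications to be nonzero. Therefore every transposed $\delta$-Poisson structure on $\mathfrak{L}$ is trivial.

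The argument is elementary, and the only point needing care is the final one. One must use commutativity to kill the scalar multiples of the identity, and check the degenerate low-dimensional case separately. In the paper's setting $\mathfrak{L}=\B(q)$ is infinite-dimensional, so that case does not arise.
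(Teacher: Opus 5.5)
Your proof is correct and follows the same reasoning the paper intends. The paper calls the lemma obvious and writes no proof, but its later use in the proof of Corollary~\ref{cor15} relies on exactly your reading of \eqref{tdp} as the $\delta$-derivation identity for $R_z$, plus commutativity, and likewise notes that associativity is not needed. Your argument that $R_z=\alpha(z)\id$ together with commutativity forces $\alpha\equiv0$ (with $\dim\mathfrak{L}\le1$ checked separately) just fills in details the paper leaves implicit.
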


\subsection{Block Lie algebras $\B(q)$}
Xia, You, and Zhou in \cite{28} defined the Block algebra $\B(q)$ for a fixed complex number $q\in\C$ as the algebra with basis $\{L_{m,i}\mid m,i\in\Z\}$ and the following multiplication table:
\begin{equation}\label{mult}
[L_{m,i},L_{n,j}]=\big(n(i+q)-m(j+q)\big)L_{m+n,i+j},\qquad i,j,m,n\in\Z.
\end{equation}
In their study, it was established that the algebras $\B(q_1)$ and $\B(q_2)$ are non-isomorphic for $q_1\neq\pm q_2$; note that $\B(q)\cong\B(-q)$ for every $q\in\C$, an isomorphism being given by $L_{m,i}\mapsto L_{-m,-i}$. Furthermore, they characterized the automorphism group and the derivation algebra of $\B(q)$ for any $q\in\C$. Their results also showed that the second scalar cohomology group of $\B(q)$ is one-dimensional, leading to the identification of the unique nontrivial central extension of the algebra.

\begin{proposition}[see \cite{15,20}]
For the algebra $\B(q)$, the following statements hold:
\begin{align*}
\AnnQ\big(\B(q)\big) &= \big\{g\in\End\big(\B(q)\big)\mid g\big(\B(q)\big)\subseteq\Ann\big(\B(q)\big)\big\}\\
&= \big\langle g_{d,k}\mid g_{d,k}(L_{m,i})=\delta_{d,m}\delta_{k,i}L_{0,-q}\big\rangle\subseteq\QDer\big(\B(q)\big),\\
\Der\big(\B(q)\big) &= \begin{cases}
\Inn\big(\B(q)\big)\oplus\langle D_1,D_2,D_5\rangle, & \text{if } q=0,\\
\Inn\big(\B(q)\big)\oplus\langle D_1,D_3,D_4,D_5\rangle, & \text{if } q\in\Z^{*},\\
\Inn\big(\B(q)\big)\oplus\langle D_1\rangle, & \text{if } q\notin\Z,
\end{cases}\\
\Der_{\frac12}\big(\B(q)\big) &= \begin{cases}
\langle\id\rangle, & \text{if } q\notin\Z,\\
\langle\id,D_5\rangle, & \text{if } q\in\Z,
\end{cases}
\end{align*}
where the maps $D_i$ are defined by
\begin{align*}
D_1(L_{m,i})&=iL_{m,i}, & D_2(L_{m,i})&=mL_{m,i}, & D_3(L_{m,i})&=mL_{m,i-q},\\
D_4(L_{m,i})&=(q+i)L_{m,i-q}, & D_5(L_{0,-2q})&=L_{0,-q},
\end{align*}
and, for $q\in\Z$, $D_5(L_{m,i})=0$ for $(m,i)\neq(0,-2q)$ (the maps $D_3$, $D_4$, $D_5$, and $g_{d,k}$ are considered only for $q\in\Z$). Here $\Ann\big(\B(q)\big)$ denotes the annihilator (the center) of $\B(q)$; it equals $\langle L_{0,-q}\rangle$ if $q\in\Z$ and is zero if $q\notin\Z$, so that $\AnnQ\big(\B(q)\big)=0$ for $q\notin\Z$. Note that an element of $\AnnQ\big(\B(q)\big)$ is an arbitrary, possibly infinite, formal sum $\sum_{d,k}\sigma_{d,k}g_{d,k}$.
In particular,
\begin{align*}
&\ad_{L_{0,-q}}=0 \text{ if } q\in\Z;\\
&D_2\in\Inn\big(\B(q)\big) \text{ if } q\neq0; \qquad D_5\in\Der_{\delta}\big(\B(q)\big) \text{ for every } \delta\in\C\text{, if } q\in\Z.
\end{align*}
\end{proposition}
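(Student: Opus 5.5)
Since this proposition collects results from \cite{15,20} together with \cite{28}, I will not rederive the full descriptions of $\Der(\B(q))$ and $\Der_{\frac12}(\B(q))$. The plan is to verify the claims that are directly checkable from the multiplication table (\ref{mult}) and to indicate how the cited descriptions fit together.

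\textbf{Annihilator and $\AnnQ$.} First I compute $\Ann(\B(q))$. An element $L_{m,i}$ is central iff $n(i+q)-m(j+q)=0$ for all $n,j$. Taking $n=0$, $j$ arbitrary forces $m=0$. Then varying $n$ forces $i=-q$, which requires $q\in\Z$. A general central element is a finite sum of basis vectors, and brackets of distinct basis vectors with a fixed $L_{n,j}$ land in distinct graded components. So the computation reduces to basis vectors, giving $\Ann=\langle L_{0,-q}\rangle$ for $q\in\Z$ and $0$ otherwise.

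Next, if $g(\B(q))\subseteq\Ann$, then both $[g(x),y]$ and $[x,g(y)]$ vanish. Taking $f''=0$ shows that $g$ is a quasi-derivation. Since a linear map is determined by arbitrary values on the basis, $g$ is a possibly infinite formal sum $\sum\sigma_{d,k}g_{d,k}$. This also yields $\ad_{L_{0,-q}}=0$.

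\textbf{Special maps.} For $D_2$ with $q\ne0$, I compute
\[
\ad_{L_{0,0}}(L_{n,j})=\bigl(nq-0\bigr)L_{n,j}=qnL_{n,j},
\]
so $D_2=\frac1q\ad_{L_{0,0}}$. For $D_5$ with $q\in\Z$, both sides of the $\delta$-derivation identity vanish on every pair of basis elements:
\begin{itemize}
\item $D_5[x,y]$ is nonzero only if $[x,y]$ has a component at $L_{0,-2q}$.
\item $[D_5x,y]$ and $[x,D_5y]$ lie in $[\Ann,\B]=0$.
\end{itemize}
So I need $D_5[L_{m,i},L_{n,j}]=0$ whenever $m+n=0$ and $i+j=-2q$. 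In that case the coefficient is
\[
n(i+q)-m(j+q)=n(i+q)+n(j+q)=n(i+j+2q)=0.
\]
Hence $D_5\in\Der_\delta$ for every $\delta$.

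\textbf{Cited descriptions.} For the derivation algebra, I invoke \cite{28} (restated in \cite{20}). The method is the standard one: decompose a derivation into $\Z^2$-homogeneous components, subtract inner parts, and solve the resulting functional equations on degrees. The main obstacle there is the case analysis on $q\in\Z$, where the extra degree-$(0,-q)$ maps $D_3,D_4$ appear. That work is in \cite{28}, so I will simply check that $D_1,D_3,D_4$ satisfy the Leibniz rule on basis elements, which is a direct coefficient comparison.

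The $\frac12$-derivation description is taken from \cite{15}.
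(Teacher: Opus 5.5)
Your proposal is correct and takes essentially the paper's approach: the paper gives no proof of its own and simply imports the descriptions of $\Der\big(\B(q)\big)$ and $\Der_{\frac12}\big(\B(q)\big)$ from \cite{15,20} (going back to \cite{28}). Your direct checks are all sound: the center, $\AnnQ\big(\B(q)\big)\subseteq\QDer\big(\B(q)\big)$ with $f''=0$, $D_2=\frac1q\ad_{L_{0,0}}$, and $D_5\in\Der_\delta\big(\B(q)\big)$ via the vanishing coefficient $n(i+j+2q)=0$ (one small imprecision: $\ad_{L_{0,-q}}=0$ follows from your centrality computation, not from the description of $\AnnQ$).
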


\begin{proposition}
The linear maps $p_k$ and $h_{d,k}$, defined as follows,
\begin{align*}
p_k(L_{m,i}) &= (i+q)L_{m,k+i-1},\\
h_{d,k}(L_{m,i}) &= mL_{d+m-1,k+i},
\end{align*}
are quasi-derivations of $\B(q)$ with related maps
\begin{align*}
p'_k(L_{m,i}) &= (k+i+2q-1)L_{m,k+i-1},\\
h'_{d,k}(L_{m,i}) &= (d+m-1)L_{d+m-1,k+i}.
\end{align*}
In particular, $p_1=D_1+q\,\id$; \ $p_{1-q}=D_4$ for $q\in\Z^{*}$;
\ $h_{1,0}=D_2$, \ $h_{1,-q}=D_3$ for $q\in\Z^{*}$, and $h_{1,k}=\frac{1}{k+q}\ad_{L_{0,k}}$ for $k\neq-q$.
\end{proposition}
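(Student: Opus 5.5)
Both families are checked directly on basis elements: we verify
\[
[f(L_{m,i}),L_{n,j}]+[L_{m,i},f(L_{n,j})]=f'([L_{m,i},L_{n,j}])
\]
for all $m,n,i,j\in\Z$. Since every map involved sends a basis vector to a scalar multiple of a single basis vector, each side is a scalar multiple of one and the same basis vector. So each check reduces to a polynomial identity in $m,n,i,j$ (with parameters $q,k,d$), read off from \eqref{mult}.

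\textbf{The maps $p_k$.} Put $A=i+q$ and $B=j+q$. All three terms live on the index $(m+n,\,i+j+k-1)$.
\begin{itemize}
\item The first term is $[(i+q)L_{m,k+i-1},L_{n,j}]$. Its coefficient is $A\big(n(A+k-1)-mB\big)$.
\item The second term has coefficient $B\big(nA-m(B+k-1)\big)$.
\item On the right, $[L_{m,i},L_{n,j}]=(nA-mB)L_{m+n,i+j}$. Applying $p'_k$ multiplies this by $(k+i+j+2q-1)=A+B+k-1$.
\end{itemize}
Expanding, the left side is $nA^2+nA(k-1)-mAB+nAB-mB^2-mB(k-1)$. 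The right side $(nA-mB)(A+B+k-1)$ expands to exactly the same expression, so $p_k$ is a quasi-derivation with related map $p'_k$.

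\textbf{The maps $h_{d,k}$.} All terms live on the index $(m+n+d-1,\,i+j+k)$.
\begin{itemize}
\item The first term is $m[L_{d+m-1,k+i},L_{n,j}]$, with coefficient $m\big(n(i+k+q)-(d+m-1)B\big)$.
\item The second term has coefficient $n\big((d+n-1)A-m(j+k+q)\big)$.
\item On the right, $h'_{d,k}$ gives $(d+m+n-1)(nA-mB)$.
\end{itemize}
On the left, the $k$-contributions $mnk-mnk$ cancel. The left side is then
\[
mnA-m(d+m-1)B+n(d+n-1)A-mnB=(nA-mB)(m+n+d-1),
\]
which matches the right side. So $h_{d,k}$ is a quasi-derivation with related map $h'_{d,k}$.

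\textbf{The special cases.} These are read off from the formulas.
\begin{itemize}
\item $p_1(L_{m,i})=(i+q)L_{m,i}$, so $p_1=D_1+q\,\id$.
\item $p_{1-q}(L_{m,i})=(i+q)L_{m,i-q}$, which is $D_4$.
\item $h_{1,0}(L_{m,i})=mL_{m,i}$, which is $D_2$.
\item $h_{1,-q}(L_{m,i})=mL_{m,i-q}$, which is $D_3$.
\item $\ad_{L_{0,k}}(L_{m,i})=[L_{0,k},L_{m,i}]=m(k+q)L_{m,i+k}$. Hence $h_{1,k}=\frac{1}{k+q}\ad_{L_{0,k}}$ when $k\neq-q$.
\end{itemize}

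The only step needing care is the bookkeeping of shifted indices inside the bracket coefficients, such as $i\mapsto k+i-1$ and $m\mapsto d+m-1$. There is no conceptual obstacle.
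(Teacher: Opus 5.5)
Your proof is correct and follows the same approach as the paper, whose entire proof is the one-line remark that the claim follows from direct computations; you carry out that computation basis pair by basis pair. The coefficient identities for $p_k$ and $h_{d,k}$ and the five special cases (including $\ad_{L_{0,k}}(L_{m,i})=m(k+q)L_{m,i+k}$) are all correct as written.
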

\begin{proof}
The proof follows from direct computations.
\end{proof}

\section{Quasi-derivations of Block Lie algebras}
Let $f\in\QDer\big(\B(q)\big)$. Then there exists a linear map $f'$ such that the following holds for all $x,y\in\B(q)$:
\[ [f(x),y]+[x,f(y)]=f'([x,y]). \]
Let the action of $f$ and $f'$ on the basis elements $L_{m,i}$ be given by
\[ f(L_{m,i})=\sum_{d,k\in\Z}\ac{d,k}{m,i}L_{d,k} \quad\text{and}\quad f'(L_{m,i})=\sum_{d,k\in\Z}\acp{d,k}{m,i}L_{d,k}. \]
By substituting these into the defining identity of a quasi-derivation, using \eqref{mult}, and comparing the coefficients of the basis elements, we obtain the following general constraint: 
\medskip 

{$\big(n(k-j+q)-(d-n)(j+q)\big)\ac{d-n,k-j}{m,i}+\big((d-m)(i+q)-m(k-i+q)\big)\ac{d-m,k-i}{n,j}=$
\begin{equation}\label{main}
=\big(n(i+q)-m(j+q)\big)\acp{d,k}{m+n,i+j}.
\end{equation}

\begin{remark}
Relation \eqref{main} only involves coefficients $\ac{d,k}{m,i}$ and $\acp{d,k}{m,i}$ with the same difference $(d-m,k-i)$, i.e., the defining identity of a quasi-derivation is homogeneous with respect to the natural $\Z^2$-grading of $\B(q)$. Hence every homogeneous component $f_{(a,b)}(L_{m,i})=\ac{m+a,i+b}{m,i}L_{m+a,i+b}$ of a quasi-derivation $f$ is again a quasi-derivation, and it suffices to determine the homogeneous quasi-derivations. In the proofs below, some exceptional values of the indices (such as $m=-1$ or $i=-1$) have to be treated separately; we indicate how to do this where it is needed.
\end{remark}

\subsection{Quasi-derivations of $\B(0)$}\label{th7-sub}
\begin{theorem}\label{th7}
$\QDer\big(\B(0)\big)=\langle\ad_{L_{d,k}}\mid d\neq0,\ k\in\Z\rangle\oplus\langle\id\rangle\oplus\langle p_k\rangle\oplus\langle h_{d,k}\rangle\oplus\AnnQ\big(\B(0)\big)$.
\end{theorem}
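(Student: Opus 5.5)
The plan is to reduce, via the Remark on the $\Z^2$-grading, to homogeneous quasi-derivations of degree $(a,b)$. Such a map has the form $f(L_{m,i})=\alpha_{m,i}L_{m+a,i+b}$ and $f'(L_{m,i})=\beta_{m,i}L_{m+a,i+b}$. For $q=0$, relation \eqref{main} becomes, for all $m,n,i,j$,
\[
\big(n(i+b)-(m+a)j\big)\alpha_{m,i}+\big(n i-m(j+b)\big)\alpha_{n,j}
\]
\[
\qquad\qquad=\big((n+a)i-m(j+b)\big)\alpha_{m,i}\;\;\text{(rewritten)}\;=\big(ni-mj\big)\beta_{m+n,i+j},
\]
where I will carefully recompute the exact coefficients from \eqref{main} with $d=m+n+a$ and $k=i+j+b$. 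The goal is to show the homogeneous solutions are spanned by the listed maps:
\begin{itemize}
\item in degree $(a,b)$ with $a\neq 0$ or $b\neq0$ generic, the maps $\ad_{L_{a,b}}$ (for $a\neq0$), $h_{a+1,b}$ and $p_{b+1}$ when the degree has the appropriate shape (namely $p_k$ has degree $(0,k-1)$, and $h_{d,k}$ has degree $(d-1,k)$);
\item in degree $(0,0)$, $\id$ and $p_1=D_1$;
\item the degree-$(-m,-i)$ component of elements $g_{m,i}$ of $\AnnQ$, which send $L_{m,i}$ to the central element $L_{0,0}$.
\end{itemize}
Since $\ad_{L_{0,k}}=k\,h_{1,k}$, the directness of the sum and the exclusion $d\neq0$ are consistent. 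I will first verify that these maps are linearly independent modulo $\AnnQ$ degree by degree.

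The key computational steps are as follows. First, specialize \eqref{main} to $n=0$ and $j=0$, i.e.\ pair with $L_{0,0}$, which is central when $q=0$. This yields $\beta_{m,i}$ in terms of $\alpha_{m,i}$ and $\alpha_{0,0}$ whenever $(m,i)$ satisfies a nondegeneracy condition, so $f'$ is essentially determined by $f$. Second, use the generators: pair with $L_{\pm1,0}$, $L_{0,\pm1}$ and $L_{1,1}$, in the spirit of how derivations of Block algebras are computed. This produces linear recursions for $\alpha_{m,i}$ in $m$ (stepping by $L_{\pm1,0}$) and in $i$ (stepping by $L_{0,\pm1}$). Eliminating $\beta$ between two such relations gives second-order difference equations. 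Their solutions should be polynomials of degree at most one in $m$ and $i$, namely combinations of $1$, $m$ and $i$. Third, substitute the general affine ansatz $\alpha_{m,i}=x+ym+zi$ back into the full relation. This should force $x,y,z$ into a space matching the span of $\id$ (degree $0$ only), $p_{b+1}$ (coefficient $i$, degree $(0,b)$), $h_{a+1,b}$ (coefficient $m$), and $\ad_{L_{a,b}}$ (coefficient $ai-bm$). The degree condition should kill the others; for instance, $\id$ and $D_1$ survive only in degree $(0,0)$, and $p$ only when $a=0$.

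The main obstacle is the exceptional indices. The recursions degenerate at points where coefficients like $n(i+b)-\dots$ vanish, for example $m=-1$, $i=-1$, $(m,i)=(0,0)$, or indices making $(m+n,i+j)$ equal to $(-a,-b)$, where the target is the central element. At these points the values $\alpha_{m,i}$ are not determined by the recursion. This freedom is exactly the source of the $\AnnQ$ summand: a value $\alpha_{-a,-b}$ at the spot mapping to $L_{0,0}$ is arbitrary. I would first try to show that every other exceptional value is forced by using a second, independent path, for instance reaching $(-1,i)$ via $L_{0,1}$-steps from $(-1,i-1)$ instead of via $L_{1,0}$-steps. Only the coefficient landing in $\Ann(\B(0))=\langle L_{0,0}\rangle$ remains free. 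Finally, since an element of $\QDer$ may be an infinite formal sum of homogeneous components, I will check that the decomposition is interpreted componentwise, with arbitrary formal sums allowed as in the $\AnnQ$ description. This completes the identification with the right-hand side of the Theorem.
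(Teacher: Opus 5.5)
Your outline reaches the right target: in degree $(a,b)$ the solutions are $\alpha_{m,i}=x\,\delta_{(a,b),(0,0)}+ym+zi+w\,\delta_{(m,i),(-a,-b)}$. However, the two steps meant to show that nothing else occurs fail as stated.

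\textbf{Your first step gives no information about $\beta$.} The specialization $n=j=0$ cannot determine $f'$ when $q=0$. Precisely because $L_{0,0}$ is central, $[L_{m,i},L_{0,0}]=0$, and the relation collapses to $(ai-bm)\alpha_{0,0}=0$, with no $\beta$ in it. It only says that $f(L_{0,0})\in\langle L_{0,0}\rangle$. You have imported the mechanism of the $q\neq0$ proofs (Theorems~\ref{th8} and~\ref{th9}). There $[L_{0,0},L_{n,j}]=nqL_{n,j}$, and $n=j=0$ gives $d\,\ac{d,k}{m,i}=m\,\acp{d,k}{m,i}$. For $q=0$ the paper has to reach $f'$ through $L_{0,\pm1}$ and $L_{\pm1,0}$. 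Their structure constants ($\mp m$, $\pm i$) vanish on the coordinate axes, and this is what produces all of its case distinctions. A correct substitute exists in degree $(a,b)\neq(0,0)$: what is central there is the \emph{image} $f(L_{-a,-b})=\alpha_{-a,-b}L_{0,0}$. Pairing with $L_{-a,-b}$ gives $(aj-bn)(\alpha_{n,j}-\beta_{n-a,j-b})=0$. In degree $(0,0)$ there is no such shortcut.

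\textbf{Eliminating $\beta$ does not force affine solutions.} Your claim that eliminating $\beta$ between the generator pairings yields difference equations with only affine solutions is false. Normalize $\alpha_{1,0}=\alpha_{0,1}=0$. The $L_{1,0}$-pairing at $(m,i)$ and the $L_{-1,0}$-pairing at $(m+2,i)$ both produce $\beta_{m+1,i}$, and elimination gives only $(i+b)(\alpha_{m+2,i}-\alpha_{m,i})=\big((a-1)i-(m+2)b\big)\alpha_{-1,0}$. This leaves every $2$-periodic perturbation in $m$ free. The $L_{1,0}$/$L_{0,1}$ elimination gives the anti-diagonal recursion $(m+1)(i+b)\alpha_{m,i}=i(m+1+a)\alpha_{m+1,i-1}$. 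In degree $(0,0)$ this reads $\alpha_{m,i}=\alpha_{m+1,i-1}$, which any function of $m+i$ satisfies.

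Affinity, and then vanishing after normalization, comes out only after combining several such families with further three-term relations. You must also treat the lines where the coefficients of these recursions vanish, and the point $(-a,-b)$; you only announce this (``I would first try to show\dots''). That is the actual content of the paper's Steps 1--4. For instance, Step 4 ends with exactly the parity ambiguity $\ac{1,k}{1,k}=\ac{1,k+2}{1,k+2}$ and needs the extra relation with $(m,i,n,j)=(1,1,1,3)$ to remove it. Your $L_{1,1}$-pairing could play that role, but you never carry it out.

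Finally, your displayed relation is garbled: the coefficient of $\alpha_{n,j}$ should be $(n+a)i-m(j+b)$.
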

\begin{proof}
Let $f\in\QDer\big(\B(0)\big)$. Then the relations \eqref{main} have the following form:
\begin{equation}\label{q0}
(nk-dj)\ac{d-n,k-j}{m,i}+(di-mk)\ac{d-m,k-i}{n,j}=(ni-mj)\acp{d,k}{m+n,i+j}.
\end{equation}
Since linear combinations of quasi-derivations are quasi-derivations, we may replace $f$ with the normalized map
\[ \tilde f=f-\sum_{d\neq0}\beta_{d,k}\ad_{L_{d,k}}-\gamma\,\id-\sum_k\lambda_kp_k-\sum_d\mu_{d,k}h_{d,k}-\sum_{d,k}\sigma_{d,k}g_{d,k}. \]
By choosing the coefficients as follows:
\begin{align*}
\beta_{d\neq0,k} &= -\ac{d,k+1}{0,1}d^{-1};\\
\gamma &= \tfrac12\big(\ac{1,0}{1,0}+\ac{-1,0}{-1,0}\big);\\
\lambda_k &= \delta_{k,0}\big(\ac{1,0}{1,1}-\ac{1,-1}{1,0}\big)+\delta_{k,1}\big(\ac{0,1}{0,1}-\tfrac12(\ac{1,0}{1,0}+\ac{-1,0}{-1,0})\big)+\dhat_{k,0}\dhat_{k,1}\ac{0,k}{0,1};\\
\mu_{d,k} &= \delta_{d,0}\delta_{k,0}\big(\ac{0,1}{1,1}-\ac{-1,1}{0,1}\big)+\tfrac12\delta_{d,1}\delta_{k,0}\big(\ac{1,0}{1,0}-\ac{-1,0}{-1,0}\big)\\
&\quad+\delta_{d,1}\dhat_{k,0}\ac{1,k}{1,0}+\dhat_{d,1}\dhat_{(d,k),(0,0)}\Big(\ac{d,k}{1,0}+\frac{k}{d-1}\ac{d-1,k+1}{0,1}\Big);\\
\sigma_{d,k} &= \delta_{d,0}\delta_{k,0}\big(\ac{0,0}{0,0}-\tfrac12(\ac{1,0}{1,0}+\ac{-1,0}{-1,0})\big)+\delta_{d,0}\dhat_{k,0}\big(\ac{0,0}{0,k}-k\lambda_{1-k}\big)\\
&\quad+\dhat_{d,0}\big(\ac{0,0}{d,k}-d\mu_{1-d,-k}\big),
\end{align*}
we may assume without loss of generality that
\begin{equation}\label{n0}
\ac{-1,0}{-1,0}=\ac{1,0}{1,1}=\ac{0,1}{1,1}=\ac{0,0}{d,k}=\ac{d,k}{1,0}=\ac{d,k}{0,1}=0\quad\text{for all}\quad d,k\in\Z.
\end{equation}
We now show that \eqref{n0} implies $\ac{d,k}{m,i}=0$ for all $m,i,d,k\in\Z$.

\begin{description}
\item[Step 1.] First, we show that $\ac{d,k}{m,i}=0$ for $d\neq m$ and $k\neq i$.

Taking $m=n=i=0$ and $j=1$ in \eqref{q0}, we obtain $\ac{d\neq0,k}{0,0}=0$.

Taking $m=i=j=0$ and $n=1$ in \eqref{q0}, we get $\ac{d,k\neq0}{0,0}=0$, i.e., $\ac{d,k}{0,0}=0$ for all $d,k\in\Z$.

Taking $i=j=0$ and $n=1$ in \eqref{q0}, we obtain $k\ac{d-1,k}{m,0}-mk\ac{d-m,k}{1,0}=0$, i.e., $\ac{d,k\neq0}{m,0}=0$.

Taking $m=n=0$ and $j=1$ in \eqref{q0}, we obtain $d\ac{d,k-1}{0,i}-di\ac{d,k-i}{0,1}=0$, i.e., $\ac{d\neq0,k}{0,i}=0$.

Considering $n=0$ and $i=0$ in \eqref{q0}, for any $m$ and $j$, we get
\[ dj\ac{d,k-j}{m,0}+mk\ac{d-m,k}{0,j}=mj\acp{d,k}{m,j}, \]
which implies $\acp{d,k}{m,j}=0$ for $m\notin\{0;d\}$ and $j\notin\{0;k\}$.

Setting $d=m$, $k=i$, $n\neq0$, and $j\neq0$ in \eqref{q0}, we obtain
\[ (ni-mj)\ac{m-n,i-j}{m,i}=(ni-mj)\acp{m,i}{m+n,i+j}. \]
By this equality, we have $\ac{m-n,i-j}{m,i}=0$ for $n\notin\{0;-m\}$, $j\notin\{0;-i\}$, and $ni\neq mj$.

Finally, setting $d=2m+n$ and $k=2i+j$ in \eqref{q0}, we obtain that $\ac{2m,2i}{m,i}=0$ for $m,i\neq0$.

The previous two relations do not cover the cases $d=2m$, $k=2i$, or $di=mk$. All cases can be treated uniformly as follows. Taking $n=0$ in \eqref{q0} and using $\ac{d\neq0,k}{0,j}=0$, we get $dj\,\ac{d,k-j}{m,i}=mj\,\acp{d,k}{m,i+j}$ for $d\neq m$; choosing $j\notin\{0;-i\}$ and using $\acp{d,k}{m,i+j}=0$ for $m\notin\{0;d\}$, $i+j\notin\{0;k\}$, we obtain $\ac{d,k}{m,i}=0$ for $d\neq0$, $m\notin\{0;d\}$, $k\neq i$. For $d=0$, taking $j=0$ in \eqref{q0} and using $\ac{d-m,k-i}{n,0}=0$ for $k\neq i$, we get $nk\,\ac{d-n,k}{m,i}=ni\,\acp{d,k}{m+n,i}$; with $d=n\notin\{0;-m\}$ this gives $\ac{0,k}{m,i}=0$ for $m\neq0$, $i\neq0$, $k\notin\{0;i\}$. The remaining coefficients $\ac{0,0}{m,i}$, $\ac{d,k}{m,0}$ ($k\neq0$), and $\ac{d,k}{0,i}$ ($d\neq0$) vanish by \eqref{n0} and the relations above.
Thus, we conclude that $\ac{d,k}{m,i}=0$ for $d\neq m$, $k\neq i$.

\item[Step 2.] Second, we show that $\ac{d,i}{m,i}=0$ for $d\neq m$.
We consider the relation \eqref{q0} with the restriction $k=i+j$. Taking $n=i=0$ and $k=j$ in \eqref{q0}, we obtain
\[ dj\ac{d,0}{m,0}+mj\ac{d-m,j}{0,j}=mj\acp{d,j}{m,j}. \]
Since for $d\neq m$, we have $\ac{d-m,j}{0,j}=0$, it follows that $d\ac{d,0}{m,0}=m\acp{d,j}{m,j}$ for $d\neq m$ and $j\neq0$.
Then, setting $n=0$, $j=1$, and $k=i+1$ in \eqref{q0}, we get
\[ d\ac{d,i}{m,i}-\big(di-m(i+1)\big)\ac{d-m,1}{0,1}=m\acp{d,i+1}{m,i+1}. \]
Since $\ac{d-m,1}{0,1}=0$ by \eqref{n0}, we have $d\ac{d,i}{m,i}=m\acp{d,i+1}{m,i+1}$.
Similarly, setting $n=0$, $j=-1$, and $k=i-1$ in \eqref{q0}, we get $d\ac{d,i}{m,i}=m\acp{d,i-1}{m,i-1}$ for $d\neq m$.
From the previous three equalities, we derive
\begin{equation}\label{six}
\ac{d,i}{m,i}=\ac{d,0}{m,0}\quad\text{for}\quad d\notin\{0;m\}.
\end{equation}
(For $i\neq-1$ one uses the second equality, and for $i=-1$ the third one.)

In particular, in the case $m=1$, we get $\ac{d,i}{1,i}=\ac{d,0}{1,0}=0$ for $d\notin\{0;1\}$.
After that, by setting $n=j=1$, $i=m\neq1$, and $k=m+1$ in \eqref{q0}, for any $d\neq m+1$, we obtain
\[ (m+1-d)\big(\ac{d-1,m}{m,m}-m\ac{d-m,1}{1,1}\big)=0, \]
which implies $\ac{d-1,m}{m,m}=m\ac{d-m,1}{1,1}$ for $m\notin\{1;d-1\}$.
Since $\ac{d,1}{1,1}=0$ for $d\neq1$, we derive that $\ac{d,m}{m,m}=0$ for $d\neq m$.
Thus, using \eqref{six}, we derive $\ac{d,0}{m,0}=\ac{d,m}{m,m}=0$, which implies $\ac{d,i}{m,i}=0$ for $d\notin\{0;m\}$.

Putting $n=d=1$, $i=0$, and $k=j$ in \eqref{q0}, we obtain
\[ mj\ac{1-m,j}{1,j}=mj\acp{1,j}{m+1,j},\ \text{which implies } \acp{1,j}{m+1,j}=0 \text{ for } j\neq0,\ m\notin\{0;1\}. \]
Setting $n=d=1$, $j=0$, and $k=i$ in \eqref{q0}, we obtain
\[ i\ac{0,i}{m,i}=i\acp{1,i}{m+1,i},\ \text{which yields } \ac{0,i}{m,i}=0 \text{ for } m\notin\{0;1\}. \]
Finally, taking $n=2$, $m=1$, $d=2$, $j=2i$, and $k=3i$ in \eqref{q0}, we obtain
\[ 2\ac{0,i}{1,i}=\ac{1,2i}{2,2i},\ \text{which implies } \ac{0,i}{1,i}=0. \]

\item[Step 3.] Third, we show that $\ac{m,k}{m,i}=0$ for $k\neq i$.
First, setting $n=0$, $i=0$, and $d=m$ in \eqref{q0}, we get
\[ mj\ac{m,k-j}{m,0}+mk\ac{0,k}{0,j}=mj\acp{m,k}{m,j}. \]
Consequently, since $\ac{m,k-j}{m,0}=0$ for $k\neq j$, we obtain $k\ac{0,k}{0,j}=j\acp{m,k}{m,j}$ for $m\neq0$, $k\neq j$.
Then, considering $n=1$, $j=0$, and $d=m+1$ in \eqref{q0}, by \eqref{n0}, we have
\[ k\ac{m,k}{m,i}=i\acp{m+1,k}{m+1,i}. \]
By combining the last two relations, we obtain the following identity:
\[ \ac{m,k}{m,i}=\ac{0,k}{0,i},\quad m\neq-1,\ k\notin\{0;i\};\ \text{i.e., if } i=1, \text{ we get } \ac{m,k}{m,1}=0 \text{ for } k\notin\{0;1\},\ m\neq-1. \]
Next, by setting $n=j=1$, $i=m$, and $d=m+1$ in \eqref{q0}, for any $k\neq m+1$, we obtain
\[ (m+1-k)\big(\ac{m,k-1}{m,m}-m\ac{1,k-m}{1,1}\big)=0,\ \text{which implies } \ac{m,k-1}{m,m}=m\ac{1,k-m}{1,1}. \]
Since $\ac{1,k}{1,1}=0$ for $k\neq1$, we derive that $\ac{m,k}{m,m}=0$ for $k\neq m$.
Thus, using $\ac{0,k}{0,i}=\ac{i,k}{i,i}=0$ (for $i\neq-1$), we conclude that $\ac{m,k}{m,i}=0$ for $k\notin\{0;i\}$, $m\neq-1$, $i\neq-1$.

The cases $m=-1$ or $i=-1$ are treated as follows. Relation \eqref{q0} with $n=0$, $d=m$, $j=1$ gives, by \eqref{n0}, $\ac{m,k-1}{m,i}=\acp{m,k}{m,i+1}$ for $m\neq0$. Using this, relation \eqref{q0} with $n=0$, $d=m$ (respectively, with $m=0$, $d=n$; respectively, with $m=-1$, $n=2$, $d=1$) yields
\begin{align*}
& j\ac{m,k-j}{m,i}+(k-i)\ac{0,k-i}{0,j}=j\ac{m,k-1}{m,i+j-1}, && m\neq0,\\
& (k-j)\ac{0,k-j}{0,i}+i\ac{n,k-i}{n,j}=i\ac{n,k-1}{n,i+j-1}, && n\neq0,\\
& (2k-j)\ac{-1,k-j}{-1,i}+(i+k)\ac{2,k-i}{2,j}=(2i+j)\ac{1,k-1}{1,i+j-1}.
\end{align*}
Taking $i=-1$ and $|j|$ large enough in the first relation, we get $\ac{m,K}{m,-1}=0$ for $m\notin\{0;-1\}$, $K\neq-1$; taking $i=-1$, $n=1$, and $|j|$ large enough in the second one, we get $\ac{0,K}{0,-1}=0$ for $K\notin\{0;-1\}$; finally, taking $|j|$ large enough in the third relation, we get $\ac{-1,K}{-1,i}=0$ for all $K\neq i$. In all three cases, the other coefficients in the relation vanish by the cases already proved.

After that, putting $n=0$, $i=k=1$, and $d=m$ in \eqref{q0}, we obtain
\[ mj\ac{m,1-j}{m,1}=mj\acp{m,1}{m,j+1},\ \text{which implies } \acp{m,1}{m,j+1}=0 \text{ for } m\neq0,\ j\notin\{0;1\}. \]
Setting $n=0$, $j=k=1$, and $d=m$ in \eqref{q0}, we obtain $m\ac{m,0}{m,i}=m\acp{m,1}{m,i+1}$, which yields
\[ \ac{m,0}{m,i}=0\quad\text{for}\quad i\notin\{0;1\}. \]
Finally, taking $i=1$, $j=2$, $k=2$, $n=2m$, and $d=3m$ in \eqref{q0}, we obtain
\[ 2\ac{m,0}{m,1}=\ac{2m,1}{2m,2},\ \text{which implies } \ac{m,0}{m,1}=0. \]

\item[Step 4.] Finally, we show that $\ac{m,i}{m,i}=0$ for all $m,i\in\Z$.
Substituting $d=m+n$ and $k=i+j$ into \eqref{q0}, the equation becomes
\begin{align}
(ni-mj)\ac{m,i}{m,i}+(ni-mj)\ac{n,j}{n,j} &= (ni-mj)\acp{m+n,i+j}{m+n,i+j}, \quad\text{i.e.,}\notag\\
\ac{m,i}{m,i}+\ac{n,j}{n,j} &= \acp{m+n,i+j}{m+n,i+j}\quad\text{for}\quad ni\neq mj. \label{seven}
\end{align}
Substituting successively $(n,j)=(0,1)$, $(n,j)=(1,0)$, and $(n,j)=(-1,0)$ into \eqref{seven}, and taking into account the conditions \eqref{n0}, we obtain the following relations:
\begin{align}
\ac{m,i}{m,i} &= \acp{m,i+1}{m,i+1}\quad\text{for}\quad m\neq0, \label{eight}\\
\ac{m,i}{m,i} &= \acp{m+1,i}{m+1,i}\quad\text{for}\quad i\neq0, \label{nine}\\
\ac{m,i}{m,i} &= \acp{m-1,i}{m-1,i}\quad\text{for}\quad i\neq0. \label{ten}
\end{align}
By analyzing the index shifts in \eqref{eight} and \eqref{nine}, we can establish $\ac{m,i}{m,i}=\ac{1,i+m-1}{1,i+m-1}$.
Furthermore, by combining \eqref{nine} and \eqref{ten}, we get $\ac{m,i}{m,i}=\ac{m+2,i}{m+2,i}$.
From these two equalities, it is not difficult to obtain $\ac{1,k}{1,k}=\ac{1,k+2}{1,k+2}$ for all $k\in\Z$.
Taking into account $\ac{1,0}{1,0}=0$, we get that $\ac{1,k}{1,k}=0$ for all even $k\in\Z$.
Now, setting $n=m=i=1$ and $j=3$ in \eqref{seven}, we get $\ac{1,1}{1,1}+\ac{1,3}{1,3}=\acp{2,4}{2,4}$.
Since $\acp{2,4}{2,4}=\ac{1,4}{1,4}=0$, it yields $\ac{1,3}{1,3}=-\ac{1,1}{1,1}$, and taking into account $\ac{1,3}{1,3}=\ac{1,1}{1,1}$, we deduce $\ac{1,1}{1,1}=0$, which implies $\ac{1,k}{1,k}=0$ for all odd $k\in\Z$; i.e., $\ac{m,i}{m,i}=0$ for all $m,i\in\Z$.
\end{description}
\end{proof}

\subsection{Quasi-derivations of $\B(q)$ for $q\notin\Z$}
\begin{theorem}\label{th8}
If $q\notin\Z$, then $\QDer\big(\B(q)\big)=\langle\ad_{L_{d,k}}\mid d\neq0,\ k\in\Z\rangle\oplus\langle\id\rangle\oplus\langle p_k\rangle\oplus\langle h_{d,k}\rangle$.
\end{theorem}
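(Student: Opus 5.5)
We follow the scheme of the proof of Theorem~\ref{th7}. By the Remark after \eqref{main} it suffices to treat homogeneous quasi-derivations. Since $q\notin\Z$, the factor $i+q$ never vanishes for $i\in\Z$, the center of $\B(q)$ is zero, and $\AnnQ\big(\B(q)\big)=0$. So no $g_{d,k}$ term appears. It also means that $f'$ is uniquely determined by $f$: if $[x,y]$ spans the space, then $f'([x,y])$ is fixed by the left-hand side. Take $f\in\QDer\big(\B(q)\big)$. We subtract a linear combination of the maps $\ad_{L_{d,k}}$ ($d\neq0$), $\id$, $p_k$ and $h_{d,k}$ from the earlier Proposition. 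The coefficients are chosen to kill a normalizing set of coefficients, in analogy with \eqref{n0}. The natural choice is to annihilate the whole images $f(L_{0,1-q'})$ and $f(L_{1,0})$, together with a few diagonal entries such as $\ac{-1,0}{-1,0}$. For $q\notin\Z$ it is more convenient to normalize using $L_{0,0}$ and $L_{1,0}$, since $[L_{0,0},L_{n,j}]=nq\,L_{n,j}$ and $[L_{1,0},L_{n,j}]=\big((j+q)-q\big)L_{n+1,j}$ are both nondegenerate enough.

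Concretely, the images are matched as follows.
\begin{itemize}
\item $h_{d,k}(L_{0,i})=0$ and $p_k(L_{0,0})=q\,L_{0,k-1}$, so the coefficients $\ac{0,k}{0,0}$ are absorbed by $\lambda_k$, with $\gamma$ adjusted for $k=1$.
\item The coefficients $\ac{d,k}{0,0}$ with $d\neq0$ are absorbed by $\ad_{L_{d,k}}$, since $\ad_{L_{d,k}}(L_{0,0})=-dq\,L_{d,k}$.
\item The coefficients $\ac{d,k}{1,0}$ are absorbed by $h_{d,k}$. The $\ad$ and $p$ contributions are already fixed, so this is a triangular system.
\item One diagonal entry such as $\ac{-1,0}{-1,0}$ is used to separate $\id$ from $h_{1,0}$, which is inner here since $q\neq0$. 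We must track this dependence so that the sum stays direct, and drop $D_2$ accordingly.
\end{itemize}
This leaves a normalized $f$ with $f(L_{0,0})=f(L_{1,0})=0$ plus one scalar condition, and we must show $f=0$.

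Next we rewrite \eqref{main} with $(n,j)=(0,0)$ and with $(n,j)=(1,0)$. The relation with $(n,j)=(0,0)$ reads
\[
q\big((d-m)-d\big)\ac{d,k}{m,i}\;+\;(\dots)\ac{d-m,k-i}{0,0}=-mq\,\acp{d,k}{m,i}.
\]
Since $f(L_{0,0})=0$, this gives $\big(d-m-\text{shift}\big)\ac{d,k}{m,i}=m\,\acp{d,k}{m,i}$ up to the exact factors. Thus $f'$ is expressed through $f$ on every $L_{m,i}$ with $m\neq0$. The relation with $(n,j)=(1,0)$ becomes a recursion in $m$ which, because $i+q\neq0$, determines $f(L_{m+1,i})$ from $f(L_{m,i})$. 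So the row $f(L_{m,\cdot})$ is propagated from the row $m=0$ and, going backwards, from $m=1$. We then feed these into the general relation \eqref{main} with $n=0$ and $j=\pm1$, in the spirit of Steps 1--3. This should show, degree by degree in the grading $(a,b)=(d-m,k-i)$, that the off-diagonal components vanish. The factors that were zero for $q=0$, such as $ni-mj$, are now $n(i+q)-m(j+q)$, which vanish much more rarely. This removes most of the exceptional cases; the residual ones, like $m=-1$ or the components $(a,b)=(0,b)$ with small $b$, are handled by choosing $|j|$ large as in Theorem~\ref{th7}.

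Finally, for the diagonal component $(a,b)=(0,0)$ we imitate Step~4. Put $\alpha_{m,i}=\ac{m,i}{m,i}$. Then \eqref{main} gives $\alpha_{m,i}+\alpha_{n,j}=\acp{m+n,i+j}{m+n,i+j}$ whenever $n(i+q)\neq m(j+q)$. From the normalization we have $\alpha_{0,0}=\alpha_{1,0}=0$ and $\alpha_{-1,0}=0$, and chaining these relations as in \eqref{eight}--\eqref{ten} should give $\alpha\equiv0$.

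The main obstacle is the components $(a,b)=(a,b)$ with $a\in\{0,1\}$, which carry the genuine families $p_k$ and $h_{d,k}$. There the recursions are consistent with nonzero solutions before normalization, so we must check carefully that the normalization kills exactly one free parameter per family. In particular we must verify that $\id$, $p_1=D_1+q\,\id$ and $h_{1,0}$ are not double-counted. I would first work out the $a=0$ and $a=1$ components explicitly as functions of $(m,i)$, and then match them to $p_k$ and $h_{d,k}$.
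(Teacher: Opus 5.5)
\textbf{Gap 1: the normalization is wrong.} Your scheme matches the paper's: normalize on $L_{0,0}$ and $L_{1,0}$, kill the off-diagonal homogeneous components, then the diagonal ones. But your scalar normalization is wrong, and with it the claim that the normalized $f$ must vanish is false.

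The combinations of $\ad_{L_{d,k}}$ ($d\neq0$), $\id$, $p_k$, $h_{d,k}$ vanishing on both $L_{0,0}$ and $L_{1,0}$ are exactly the multiples of $p_1-q\,\id=D_1$. The reason is that $\id$ and $p_1$ act on $L_{0,0}$ and on $L_{1,0}$ as the scalars $1$ and $q$. By contrast, $\id$ and $h_{1,0}$ are already separated by $L_{0,0}$, since $h_{1,0}(L_{0,0})=0$. Now $D_1(L_{m,i})=iL_{m,i}$ vanishes on every $L_{m,0}$. So $D_1$ is a nonzero derivation with $f(L_{0,0})=f(L_{1,0})=0$ and $\ac{-1,0}{-1,0}=0$. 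Its diagonal $\alpha_{m,i}=i$ satisfies $\alpha_{m,i}+\alpha_{n,j}=\alpha'_{m+n,i+j}$ with $\alpha'=\alpha$ and $\alpha_{0,0}=\alpha_{1,0}=\alpha_{-1,0}=0$, so your final chaining cannot give $\alpha\equiv0$.

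The extra condition must be a diagonal entry $\ac{m,i}{m,i}$ with $i\neq0$. The paper imposes $\ac{0,1}{0,1}=0$ and then uses $(n,j)=(0,1)$ in \eqref{sixteen} to move in the $i$-direction. The condition $\ac{-1,0}{-1,0}=0$ is the right one for $q=0$: there one normalizes on $L_{1,0}$ and $L_{0,1}$, and the leftover direction $\id-D_1-D_2$ does not vanish on $L_{-1,0}$.

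\textbf{Gap 2: the off-diagonal step is only asserted.} The reasons you give for it being routine do not hold.
\begin{itemize}
\item The recursions from $(n,j)=(\pm1,0)$ have coefficients $(m-1)q-k$, $(m-1)q-i$, $(m+1)q+k$, $(m+1)q+i$ (see \eqref{twelve} and \eqref{fifteen}), not $i+q$. These do vanish for rational $q$.
\item The real difficulty lies in the component of degree $(0,-2q)$ when $2q\in\Z$, and of degree $(0,-3q)$ when $3q\in\Z$. There the recursion \eqref{twelve} on the row $i=0$ has the nonzero solution $\ac{m,-2q}{m,0}=\binom{m}{2}\ac{2,-2q}{2,0}$, respectively $\ac{m,-3q}{m,0}=\binom{m+1}{3}\ac{2,-3q}{2,0}$. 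This solution is compatible with your normalization.
\item The paper kills it only by a separate computation with $n=2$, involving the coefficients $\ac{m,0}{m,2q}$ (respectively $\ac{m,0}{m,3q}$). Your ``choose $|j|$ large'' is not shown to handle this, and these are not ``small'' shifts $b$.
\item A zero center does not make $\B(q)$ perfect. For $2q\in\Z$ one has $L_{0,-2q}\notin[\B(q),\B(q)]$, so $f'$ is not determined by $f$ there. Any step that reads off values of $f'$ must account for this.
\end{itemize}
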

\begin{proof}
Let $f\in\QDer\big(\B(q)\big)$. Since linear combinations of quasi-derivations are quasi-derivations, we may replace $f$ with the normalized map
\[ \tilde f=f-\sum_{d\neq0}\beta_{d,k}\ad_{L_{d,k}}-\gamma\,\id-\sum_k\lambda_kp_k-\sum_d\mu_{d,k}h_{d,k}. \]
By choosing the coefficients as follows:
\begin{align*}
\beta_{d\neq0,k} &= -\ac{d,k}{0,0}d^{-1}q^{-1};\\
\gamma &= \ac{0,0}{0,0}-q\big(\ac{0,1}{0,1}-\ac{0,0}{0,0}\big);\\
\lambda_k &= \delta_{k,1}\big(\ac{0,1}{0,1}-\ac{0,0}{0,0}\big)+\dhat_{k,1}\ac{0,k-1}{0,0}q^{-1};\\
\mu_{d,k} &= \delta_{d,1}\big(\ac{1,k}{1,0}-\ac{0,k}{0,0}\big)+\dhat_{d,1}\Big(\ac{d,k}{1,0}+\frac{k-(d-2)q}{(d-1)q}\ac{d-1,k}{0,0}\Big),
\end{align*}
we may assume without loss of generality that
\begin{equation}\label{n8}
\ac{0,1}{0,1}=\ac{d,k}{0,0}=\ac{d,k}{1,0}=0\quad\text{for all}\quad d,k\in\Z.
\end{equation}
We now show that \eqref{n8} implies $\ac{d,k}{m,i}=0$ for all $m,i,d,k\in\Z$.

\begin{description}
\item[Step 1.] First, we show that $\ac{d,k}{m,i}=0$ for $d\neq m$.

Substituting $n=j=0$ into \eqref{main}, using $\ac{d,k}{0,0}=0$ and $q\notin\Z$, we obtain
\[ d\ac{d,k}{m,i}=m\acp{d,k}{m,i}, \text{ which in the case of } m=0 \text{ implies } \ac{d,k}{0,i}=0 \text{ for } d\neq0. \]
Letting $n=i=0$ in \eqref{main}, and using $\ac{d-m,k}{0,j}=0$ for $d\neq m$, we have
\[ d\ac{d,k-j}{m,0}=m\acp{d,k}{m,j} \text{ for } d\neq m. \]
Combining these two equalities, we obtain $\ac{d,k}{m,j}=\ac{d,k-j}{m,0}$ for $d\notin\{0;m\}$.
Hence, substituting $m=1$ in this relation, we have $\ac{d,k}{1,j}=\ac{d,k-j}{1,0}=0$ for $d\notin\{0;1\}$.
Now, letting $n=1$, $j=0$ in \eqref{main}, we get
\[ (k+2q-dq)\ac{d-1,k}{m,i}=(i+q-mq)\acp{d,k}{m+1,i},\ \text{which implies} \]
\[ (m+1)(k+2q-dq)\ac{d-1,k-i}{m,0}=d(i+q-mq)\ac{d,k-i}{m+1,0}\quad\text{for}\quad m\neq-1,\ d\notin\{0;1;m+1\}. \]
For $i=0$, $k=t$ and for $i=1$, $k=t+1$, these relations give us the following system of equations:
\[ \begin{cases}
(m+1)(t+2q-dq)\ac{d-1,t}{m,0} = d(q-mq)\ac{d,t}{m+1,0},\\
(m+1)(t+1+2q-dq)\ac{d-1,t}{m,0} = d(1+q-mq)\ac{d,t}{m+1,0}.
\end{cases} \]
Solving this system of equations, we have $\big(t+q(m+1-d)\big)\ac{d-1,t}{m,0}=0$, which implies
\[ \ac{d-1,k}{m,0}=0 \text{ for } m\neq-1,\ d\notin\{1;m+1\};\ \text{i.e., we have } \ac{d,k}{m,i}=0 \text{ for } m\neq-1,\ d\notin\{0;m\}. \]
By choosing $n=d\notin\{0;-1\}$ and $m\notin\{0;n\}$ in \eqref{main}, we derive $\ac{0,k}{m,i}=0$ for $m\notin\{0;-1\}$.
Next, substituting $n=-1$ and $m=2$ into \eqref{main}, we have
\[ \big((d-2)(i+q)-2(k-i+q)\big)\ac{d-2,k-i}{-1,j}=0\quad\text{for}\quad d\neq1. \]
By choosing $i$ such that $\big((d-2)(i+q)-2(k-i+q)\big)\neq0$, we have $\ac{d,k}{-1,j}=0$ for $d\neq-1$.
Hence we deduce $\ac{d,k}{m,i}=0$ for $d\neq m$.

\item[Step 2.] Now, we show that $\ac{m,k}{m,i}=0$ for $k\neq i$.
Substituting $n=j=0$ and $d=m$ in identity \eqref{main} and taking into account \eqref{n8}, we can derive
\[ \ac{m,k}{m,i}=\acp{m,k}{m,i}\quad\text{for}\quad m\neq0. \]
Setting $n=1$, $j=0$, and $d=m+1$ in \eqref{main}, we get
\[ \big((m-1)q-k\big)\ac{m,k}{m,i}=\big((m-1)q-i\big)\acp{m+1,k}{m+1,i}. \]
From these two equalities, we get that
\begin{equation}\label{twelve}
\big((m-1)q-k\big)\ac{m,k}{m,i}=\big((m-1)q-i\big)\ac{m+1,k}{m+1,i}\quad\text{for}\quad m\neq-1.
\end{equation}
By using \eqref{twelve} with $i=0$, for particular values of $m$, we get
\[ \ac{3,k}{3,0}=\frac{q-k}{q}\ac{2,k}{2,0}\quad\text{and}\quad\ac{5,k}{5,0}=\frac{(3q-k)(2q-k)(q-k)}{6q^3}\ac{2,k}{2,0}. \]
Then, substituting the indices $n=3$, $j=0$, $m=2$, $i=0$, and $d=5$ into \eqref{main}, we derive that
\[ (3k+q)\ac{2,k}{2,0}+(q-2k)\ac{3,k}{3,0}=q\ac{5,k}{5,0},\ \text{which implies } \frac{(k+q)(k+2q)(k+3q)}{6q^2}\ac{2,k}{2,0}=0. \]
Since $q\notin\Z$, the factor $k+q\neq0$, and we have $\ac{2,k}{2,0}=0$ for $k\notin\{-2q;-3q\}$.
Consider the following three subcases:
\begin{description}
\item[$(2q,3q\notin\Z)$] If $2q\notin\Z$ and $3q\notin\Z$, then $\ac{2,k}{2,0}=0$ for all $k\in\Z$.
\item[$(2q\in\Z)$] If $2q\in\Z$, then setting $i=0$ and $k=-2q$ in \eqref{twelve}, we have
\[ (m+1)\ac{m,-2q}{m,0}=(m-1)\ac{m+1,-2q}{m+1,0} \text{ for } m\neq-1, \text{ it follows that} \]
\[ \ac{m,-2q}{m,0}=\tfrac{m(m-1)}{2}\ac{2,-2q}{2,0} \text{ for } m\geq0;\qquad \ac{m,-2q}{m,0}=\tfrac{m(m-1)}{2}\ac{-1,-2q}{-1,0} \text{ for } m\leq-1. \]
Putting $n=-1$, $j=0$, $m=2$, $i=0$, and $d=1$, $k=-2q$ in \eqref{main}, we get
\[ \ac{-1,-2q}{-1,0}=\ac{2,-2q}{2,0};\ \text{i.e., } \ac{m,-2q}{m,0}=\tfrac{m(m-1)}{2}\ac{2,-2q}{2,0} \text{ for all } m\in\Z. \]
Next, setting $n=1$, $j=0$, $i=2q$, $k=0$, and $d=m+1$ in \eqref{main}, we get
\[ (1-m)\ac{m,0}{m,2q}=(3-m)\ac{m+1,0}{m+1,2q} \text{ for } m\neq-1, \text{ which implies} \]
\begin{equation}\label{thirteen}
\ac{3,0}{3,2q}=0,\quad \ac{5,0}{5,2q}=3\ac{4,0}{4,2q}\quad\text{and}\quad\ac{6,0}{6,2q}=6\ac{4,0}{4,2q}.
\end{equation}
Then, putting $n=2$, $i=2q$, $j=0$, $k=0$, and $d=m+2$ in \eqref{main}, we get
\[ (2-m)\ac{m,0}{m,2q}+(6+m)\ac{2,-2q}{2,0}=(6-m)\ac{m+2,0}{m+2,2q} \text{ for } m\neq-2, \text{ which yields} \]
\begin{equation}\label{fourteen}
3\ac{2,-2q}{2,0}=\ac{5,0}{5,2q}\quad\text{and}\quad 5\ac{2,-2q}{2,0}-\ac{4,0}{4,2q}=\ac{6,0}{6,2q}.
\end{equation}
Combining the relations \eqref{thirteen} and \eqref{fourteen}, we have $\ac{2,-2q}{2,0}=0$.
\item[$(3q\in\Z)$] If $3q\in\Z$, then setting $i=0$ and $k=-3q$ in \eqref{twelve}, we have
\[ (m+2)\ac{m,-3q}{m,0}=(m-1)\ac{m+1,-3q}{m+1,0} \text{ for } m\neq-1. \]
Similarly to the previous subcase, putting $n=-2$, $j=0$, $m=3$, $i=0$, $d=1$, and $k=-3q$ in \eqref{main}, we get $\ac{-2,-3q}{-2,0}=-\ac{2,-3q}{2,0}$. Consequently,
\[ \ac{m,-3q}{m,0}=\tfrac{(m-1)m(m+1)}{6}\ac{2,-3q}{2,0} \text{ for all } m\in\Z. \]
Next, setting $n=1$, $j=0$, $i=3q$, $k=0$, and $d=m+1$ in \eqref{main}, we obtain
\[ (1-m)\ac{m,0}{m,3q}=(4-m)\ac{m+1,0}{m+1,3q} \text{ for } m\neq-1. \]
Then, setting $n=2$, $i=3q$, $j=0$, $k=0$, and $d=m+2$ in \eqref{main}, we get
\[ (2-m)\ac{m,0}{m,3q}+2(4+m)\ac{2,-3q}{2,0}=(8-m)\ac{m+2,0}{m+2,3q} \text{ for } m\neq-2. \]
Using these two identities for appropriate values of $m$, we obtain
\[ \ac{4,0}{4,3q}=0;\ \ac{6,0}{6,3q}=4\ac{5,0}{5,3q};\ \ac{7,0}{7,3q}=10\ac{5,0}{5,3q};\ 4\ac{2,-3q}{2,0}=\ac{6,0}{6,3q};\ 6\ac{2,-3q}{2,0}-\ac{5,0}{5,3q}=\ac{7,0}{7,3q}. \]
Consequently, it follows that $\ac{2,-3q}{2,0}=0$.
\end{description}
Thus, we have established that $\ac{2,k}{2,0}=0$ for all $k\in\Z$.
After that, putting $n=-1$, $j=0$, $i=0$, $m=2$, and $d=1$ (or $m=3$ and $d=2$) in \eqref{main}, we obtain
\[ (2k+3q)\ac{-1,k}{-1,0}=0 \text{ and } (3k+4q)\ac{-1,k}{-1,0}=0, \text{ which implies } \ac{-1,k}{-1,0}=0 \text{ for all } k\in\Z. \]
Substituting $n=-1$, $j=0$, and $d=m-1$ into the main identity \eqref{main} yields
\begin{equation}\label{fifteen}
\big((m+1)q+k\big)\ac{m,k}{m,i}=\big((m+1)q+i\big)\ac{m-1,k}{m-1,i}\quad\text{for}\quad m\neq1.
\end{equation}
Considering \eqref{twelve} and \eqref{fifteen} as a linear system of equations, we obtain
\[ \ac{m,k}{m,i}=0 \text{ for } m\notin\{-1;0\},\ k\neq-i-3q. \]
Next, setting $k=j-i-3q$, $d=m+n\neq0$, and $m,n\notin\{-1;0\}$, $j\notin\{0;i\}$ in \eqref{main}, we have $\ac{m,k}{m,i}=0$ for $k=-i-3q$.
Finally, considering $m=0$, $n\neq0$ and $m=-1$, $n\notin\{0;1\}$ in \eqref{main}, we conclude that $\ac{0,k}{0,i}=0$ and $\ac{-1,k}{-1,i}=0$. Thus, we have $\ac{m,k}{m,i}=0$ for all $m\in\Z$, $k\neq i$.

\item[Step 3.] Finally, we show that $\ac{m,i}{m,i}=0$ for all $m,i\in\Z$.
Substituting $d=m+n$ and $k=i+j$ into \eqref{main}, the equation becomes
\[ \big(n(i+q)-m(j+q)\big)\ac{m,i}{m,i}+\big(n(i+q)-m(j+q)\big)\ac{n,j}{n,j}=\big(n(i+q)-m(j+q)\big)\acp{m+n,i+j}{m+n,i+j}. \]
Hence, whenever $n(i+q)-m(j+q)\neq0$, we have
\begin{equation}\label{sixteen}
\ac{m,i}{m,i}+\ac{n,j}{n,j}=\acp{m+n,i+j}{m+n,i+j}\quad\text{for}\quad n(i+q)\neq m(j+q).
\end{equation}
Substituting successively $(n,j)=(0,0)$, $(n,j)=(1,0)$, and $(n,j)=(0,1)$ into \eqref{sixteen} and taking into account the conditions \eqref{n8}, we obtain the following relations:
\begin{align}
\ac{m,i}{m,i} &= \acp{m,i}{m,i}\quad\text{for}\quad m\neq0; \label{seventeen}\\
\ac{m,i}{m,i} &= \acp{m+1,i}{m+1,i}\quad\text{for}\quad (m,i)\neq(1,0); \label{eighteen}\\
\ac{m,i}{m,i} &= \acp{m,i+1}{m,i+1}\quad\text{for}\quad m\neq0. \label{nineteen}
\end{align}
Since $\ac{1,0}{1,0}=0$, using \eqref{seventeen} and \eqref{nineteen}, we obtain $\ac{1,i}{1,i}=0$ for all $i\in\Z$.
Moreover, from \eqref{eighteen} and \eqref{nineteen}, we get $\ac{2,0}{2,0}=\acp{2,1}{2,1}=\ac{1,1}{1,1}=0$.
By analyzing the index shifts in \eqref{seventeen} and \eqref{eighteen}, we can establish
\[ \ac{m,0}{m,0}=\ac{2,0}{2,0}=0;\qquad \ac{m,i}{m,i}=\ac{1,i}{1,i}=0 \text{ for } m\geq0;\qquad \ac{m,i}{m,i}=\ac{-1,i}{-1,i} \text{ for } m\leq-1. \]
Finally, putting $m=-1$, $n=2$, and $j=0$ in \eqref{sixteen}, we obtain
\[ \ac{-1,i}{-1,i}+\ac{2,0}{2,0}=\acp{1,i}{1,i},\ \text{which implies } \ac{-1,i}{-1,i}=0 \text{ for } 2i+3q\neq0. \]
If $2i+3q=0$ (which is possible only if $3q\in\Z$), then $3i+4q=-\frac{q}{2}\neq0$, and putting $m=-1$, $n=3$, and $j=0$ in \eqref{sixteen}, we obtain $\ac{-1,i}{-1,i}+\ac{3,0}{3,0}=\acp{2,i}{2,i}=\ac{2,i}{2,i}=0$, i.e., again $\ac{-1,i}{-1,i}=0$. Hence $\ac{m,i}{m,i}=0$ for all $m,i\in\Z$.
\end{description}
\end{proof}

\subsection{Quasi-derivations of $\B(q)$ for $q\in\Z^{*}$}\label{th9-sub}
\begin{theorem}\label{th9}
If $q\in\Z^{*}$, then
\[ \QDer\big(\B(q)\big)=\langle\ad_{L_{d,k}}\mid d\neq0,\ k\in\Z\rangle\oplus\langle\id\rangle\oplus\langle p_k\rangle\oplus\langle h_{d,k}\rangle\oplus\AnnQ\big(\B(q)\big). \]
\end{theorem}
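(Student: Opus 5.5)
The plan is to follow the template of Theorems \ref{th7} and \ref{th8}, now taking the central element $L_{0,-q}$ into account. First, the inclusion ``$\supseteq$'' holds because every listed map is a quasi-derivation: $\ad_{L_{d,k}}$ and $\id$ trivially, $p_k$ and $h_{d,k}$ by the Proposition above, and $g_{d,k}\in\AnnQ(\B(q))$ by the Proposition quoted from \cite{15,20}. For the converse, let $f\in\QDer(\B(q))$. By the Remark, it suffices to treat homogeneous components, although the normalization can be written globally.

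\textbf{Normalization.} Subtract suitable combinations
\[ \sum_{d\neq0}\beta_{d,k}\ad_{L_{d,k}}+\gamma\,\id+\sum_k\lambda_kp_k+\sum\mu_{d,k}h_{d,k}+\sum\sigma_{d,k}g_{d,k}. \]
The coefficients should be chosen as in Theorem \ref{th8}, with $\beta_{d,k}=-\ac{d,k}{0,0}d^{-1}q^{-1}$, and so on; this is possible since $q\neq0$. The $g_{d,k}$ are used to kill the coefficients $\ac{0,-q}{m,i}$, i.e.\ the component along the center. The aim is to achieve
\[ \ac{0,1}{0,1}=\ac{d,k}{0,0}=\ac{d,k}{1,0}=\ac{0,-q}{m,i}=0, \]
possibly replacing a coordinate that degenerates when $q\in\Z$ (for instance, where $k+q=0$ or $p_{1-q}=D_4$ interacts with the center) by an alternative one. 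Every formula from the proof of Theorem \ref{th8} that divided by an expression vanishing only for $q\in\Z$ must be rechecked here.

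\textbf{Killing the coefficients.} We then prove $\ac{d,k}{m,i}=0$ in three stages, as before:
\begin{enumerate}
\item off-diagonal in the first index ($d\neq m$);
\item off-diagonal in the second index ($k\neq i$);
\item the diagonal coefficients $\ac{m,i}{m,i}$.
\end{enumerate}
The equations in \eqref{main} are identical in form; the only difference is that factors such as $i+q$, $k+q$, $k+2q$, $k+3q$, $2i+3q$ can now vanish for integer indices. For stages 1 and 3, the generic arguments of Theorem \ref{th8} go through once we choose auxiliary indices ($n$, $j$, or ``$|j|$ large'') that avoid these finitely many bad values. The exceptional index $i=-q$, where $L_{m,-q}$ acts like a Witt-type element and $[L_{0,-q},\cdot]=0$, must be handled separately using the normalization $\ac{0,-q}{\ast}=0$. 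In stage 3, the additivity relation \eqref{sixteen} together with the chain relations \eqref{seventeen}--\eqref{nineteen} gives $\ac{m,i}{m,i}=0$, after rerouting around the pairs with $n(i+q)=m(j+q)$.

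\textbf{Main obstacle.} I expect the hardest part to be stage 2, specifically the analogue of the cubic factor $(k+q)(k+2q)(k+3q)$ in the computation of $\ac{2,k}{2,0}$. For $q\in\Z$, all three roots $k=-q,-2q,-3q$ are integers, so all three subcases occur at once. The case $k=-q$ is new, and it is exactly where the genuine extra quasi-derivations $D_4=p_{1-q}$ and $D_3=h_{1,-q}$ live, so the normalization must already have absorbed them. The plan is to mimic the $2q\in\Z$ and $3q\in\Z$ subcases: derive the recursions \eqref{twelve} and \eqref{fifteen} at $k=-q$, express $\ac{m,-q}{m,0}$ as a polynomial in $m$ times a single unknown, and then feed in the relation with $n=2$ or $n=3$ to force that unknown to vanish. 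Finally, we check that the directness of the sum is preserved: the central maps $g_{d,k}$ are independent of the rest, because none of $\ad$, $\id$, $p_k$, $h_{d,k}$ maps into $\langle L_{0,-q}\rangle$ on all of $\B(q)$.
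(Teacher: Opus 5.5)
Your overall scheme (subtract known quasi-derivations, then kill the off-diagonal and diagonal coefficients in three stages) is the paper's. However, the normalization you write down is insufficient. You mention that some coordinate may need replacing but never say which, and your later steps assume no repair is needed.

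Work degree by degree, where degree $(u,v)$ means $L_{m,i}\mapsto c\,L_{m+u,i+v}$. In degree $(u,v)$ the maps to subtract are $\ad_{L_{u,v}}$ ($u\neq0$), $h_{u+1,v}$, $p_{v+1}$ ($u=0$), $\id$ ($(u,v)=(0,0)$) and $g_{-u,-q-v}$. Your conditions in that degree are $\ac{u,v}{0,0}=0$, $\ac{u+1,v}{1,0}=0$ and $\ac{0,-q}{-u,-q-v}=0$, plus $\ac{0,1}{0,1}=0$ in degree $(0,0)$. Three coincidences occur:
\begin{itemize}
\item for $(u,v)=(0,-q)$ the first and third conditions coincide;
\item for $(u,v)=(-1,-q)$ the second and third conditions coincide;
\item for $q=-1$ the condition $\ac{0,1}{0,1}=0$ is itself the central condition $\ac{0,-q}{0,-q}=0$.
\end{itemize}
So in these degrees you have one condition fewer than maps. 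Concretely, consider
\[ F=p_{1-q}-q\,h_{1,-q}-q\,g_{0,0}=D_4-qD_3-q\,g_{0,0},\qquad G=h_{0,-q}-g_{1,0}, \]
and, for $q=-1$, also $D_1-g_{0,1}$. These are nonzero quasi-derivations satisfying every condition on your list: $F(L_{0,0})=F(L_{1,0})=G(L_{0,0})=G(L_{1,0})=0$, no value of either has an $L_{0,-q}$-component, and $\ac{0,1}{0,1}=0$ for both. Yet $F(L_{1,1})=L_{1,1-q}$ and $G(L_{1,1})=L_{0,1-q}$. Hence ``normalized $\Rightarrow f=0$'' is false for your normalization.

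This breaks the step you call the main obstacle. $F$ satisfies all relations \eqref{main} and has $\ac{m,-q}{m,0}=-q(m-1)$ for $m\neq0$. That is exactly the ``polynomial in $m$ times one unknown'' shape you predict, with unknown $\ac{2,-q}{2,0}=-q\neq0$, so no further relation with $n=2$ or $n=3$ can force it to vanish. $D_3$ and $D_4$ are not ``already absorbed''. Stage 1 fails in the same way. The system $\big(t+q(m+1-d)\big)\ac{d-1,t}{m,0}=0$ from Theorem~\ref{th8} says nothing about $\ac{m-1,-q}{m,0}$. That is precisely where $G$ lives: $G(L_{m,0})=mL_{m-1,-q}$ for $m\neq1$.

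The missing ingredient is the paper's normalization \eqref{n9}. It replaces $\ac{0,1}{0,1}=0$ by $\ac{1,1}{1,1}=0$, so that $\gamma$ and $\lambda_1$ are determined uniformly in $q$, including $q=-1$. It also adds $\ac{1,1-q}{1,1}=0$ and $\ac{0,1-q}{1,1}=0$, which separate $\lambda_{1-q}$ and $\mu_{0,-q}$ from $\sigma_{0,0}$ and $\sigma_{1,0}$. Correspondingly, the $\sigma_{d,k}$ must be corrected by the central components of the maps already subtracted, e.g.\ by $dq\beta_{-d,-k-q}+d\mu_{1-d,-k-q}$. The vanishing of the components of degrees $(0,-q)$ and $(-1,-q)$ (and $(0,0)$ when $q=-1$) then has to be derived from these extra conditions, not from the relations \eqref{main} alone.

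A smaller point concerns directness. It is not enough that no single map among $\ad_{L_{d,k}}$, $\id$, $p_k$, $h_{d,k}$ takes values in the center; you need that no nonzero finite combination does. This holds because, in each degree, such a combination has a coefficient affine in $(m,i)$ that would have to vanish at all lattice points but one.
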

\begin{proof}
Let $f\in\QDer\big(\B(q)\big)$. Since linear combinations of quasi-derivations are quasi-derivations, we may replace $f$ with the normalized map
\[ \tilde f=f-\sum_{d\neq0}\beta_{d,k}\ad_{L_{d,k}}-\gamma\,\id-\sum_k\lambda_kp_k-\sum_d\mu_{d,k}h_{d,k}-\sum_{d,k}\sigma_{d,k}g_{d,k}, \]
by choosing the coefficients as follows:
\begin{align*}
\beta_{d\neq0,k} &= -\ac{d,k}{0,0}d^{-1}q^{-1};\\
\gamma &= \ac{0,0}{0,0}-q\big(\ac{1,1}{1,1}-\ac{1,0}{1,0}\big);\\
\lambda_k &= \delta_{k,1}\big(\ac{1,1}{1,1}-\ac{1,0}{1,0}\big)+\delta_{k,1-q}\big(\ac{1,1-q}{1,1}-\ac{1,-q}{1,0}\big)+\dhat_{k,1}\dhat_{k,1-q}\ac{0,k-1}{0,0}q^{-1};\\
\mu_{d,k} &= \delta_{d,1}\delta_{k,-q}\big((1+q)\ac{1,-q}{1,0}-q\ac{1,1-q}{1,1}\big)+\delta_{d,0}\delta_{k,-q}\Big(\ac{0,1-q}{1,1}-\frac{(q+1)\ac{-1,-q}{0,0}}{q}\Big)\\
&\quad+\delta_{d,1}\dhat_{k,-q}\big(\ac{1,k}{1,0}-\ac{0,k}{0,0}\big)+\dhat_{d,1}\dhat_{(d,k),(0,-q)}\Big(\ac{d,k}{1,0}+\frac{k-(d-2)q}{(d-1)q}\ac{d-1,k}{0,0}\Big);\\
\sigma_{d,k} &= \delta_{d,0}\delta_{k,0}\big(\ac{0,-q}{0,0}-q\lambda_{1-q}\big)+\delta_{d,0}\delta_{k,-q}\big(\ac{0,-q}{0,-q}-\gamma\big)\\
&\quad+\delta_{d,0}\dhat_{k,0}\dhat_{k,-q}\big(\ac{0,-q}{0,k}-(k+q)\lambda_{1-k-q}\big)+\dhat_{d,0}\big(\ac{0,-q}{d,k}-dq\beta_{-d,-k-q}-d\mu_{1-d,-k-q}\big);
\end{align*}
we may assume without loss of generality that
\begin{equation}\label{n9}
\ac{1,1}{1,1}=\ac{0,1-q}{1,1}=\ac{1,1-q}{1,1}=\ac{d,k}{0,0}=\ac{d,k}{1,0}=\ac{0,-q}{d,k}=0\quad\text{for all}\quad d,k\in\Z.
\end{equation}
We now show that \eqref{n9} implies $\ac{d,k}{m,i}=0$ for all $m,i,d,k\in\Z$.

\begin{description}
\item[Step 1.] First, we show that $\ac{d,k}{m,i}=0$ for $d\neq m$.
Substituting $n=j=0$ into \eqref{main} yields
\begin{equation}\label{twentyone}
d\ac{d,k}{m,i}=m\acp{d,k}{m,i}.
\end{equation}
Substituting $m=0$ into this relation implies that $\ac{d,k}{0,i}=0$ for $d\neq0$.
Letting $n=i=0$ in \eqref{main}, and using $\ac{d,k}{0,i}=0$, we get $d(j+q)\ac{d,k-j}{m,0}=m(j+q)\acp{d,k}{m,j}$, which implies
\begin{equation}\label{twentytwo}
d\ac{d,k-j}{m,0}=m\acp{d,k}{m,j}\quad\text{for}\quad d\neq m,\quad j\neq-q.
\end{equation}
Combining \eqref{twentyone} and \eqref{twentytwo}, we obtain
\begin{equation}\label{twentythree}
\ac{d,k}{m,i}=\ac{d,k-i}{m,0}\quad\text{for}\quad d\notin\{0;m\},\quad i\neq-q.
\end{equation}
Substituting $m=1$ into \eqref{twentythree}, we obtain $\ac{d,k}{1,i}=\ac{d,k-i}{1,0}=0$ for $d\notin\{0;1\}$, $i\neq-q$.
Next, by substituting $n=1$ and $j=0$ into \eqref{main}, using $\ac{d,k}{1,0}=0$ and relations \eqref{twentytwo} and \eqref{twentythree}, for $m\neq-1$, $d\notin\{0;1;m+1\}$, $i\neq-q$, we have
\[ (m+1)(k+2q-dq)\ac{d-1,k-i}{m,0}=d(i+q-mq)\ac{d,k-i}{m+1,0}. \]
For $i=0$, $k=t$ and for $i=1$, $k=t+1$, these relations give us the following system of equations:
\[ \begin{cases}
(m+1)(t+2q-dq)\ac{d-1,t}{m,0} = d(q-mq)\ac{d,t}{m+1,0},\\
(m+1)(t+1+2q-dq)\ac{d-1,t}{m,0} = d(1+q-mq)\ac{d,t}{m+1,0}.
\end{cases} \]
Solving this system of equations, we have that
\[ \ac{d-1,t}{m,0}=\ac{d,t}{m+1,0}=0 \text{ for } t+q(m+1-d)\neq0, \]
which implies $\ac{d,k}{m,0}=0$ for $m\neq-1$, $d\notin\{0;m\}$. Thus, we have that
\[ \ac{d,k}{m,i}=\ac{d,k-i}{m,0}=0 \text{ for } m\neq-1,\ d\notin\{0;m\},\ i\neq-q. \]
Choosing $d=n\notin\{0;-1\}$ and $m\notin\{0;n\}$ in \eqref{main}, it follows that
\[ \ac{0,k}{m,i}=0 \text{ for } m\notin\{0;-1\},\ i\neq-q. \]
Substituting $n=-1$, $m=2$ into \eqref{main}, then applying \eqref{twentytwo}, we obtain
\[ \big((d-2)(i+q)-2(k-i+q)\big)\ac{d-2,k-i}{-1,j}=0, \]
and by choosing $i$ such that the coefficient $(d-2)(i+q)\neq2(k-i+q)$, we obtain
\[ \ac{d-2,k-i}{-1,j}=0 \text{ for } d\neq1. \]
Consequently, $\ac{d,k}{-1,j}=\ac{d,k-j}{-1,0}=0$ for $d\neq-1$, $j\neq-q$.
Next, setting $i=-q$ and $j=2q$ in \eqref{main}, we derive
\[ \big(n(k-q)-3q(d-n)\big)\ac{d-n,k-2q}{m,-q}=0 \text{ for } d\neq m+n. \]
By choosing $n$ such that $n(k-q)\neq3q(d-n)$, we obtain $\ac{d-n,k-2q}{m,-q}=0$ for $d\neq m$.
Hence, we deduce $\ac{d,k}{m,i}=0$ for all $m\in\Z$ and $d\neq m$.

\item[Step 2.] Second, we show that $\ac{m,k}{m,i}=0$ for $k\neq i$.
The proof of this case is similar to that of Step 2 in Theorem \ref{th8}; we highlight the key differences below. First, considering $n=j=0$ and $n=1$, $j=0$ in \eqref{main}, we get
\[ \big((m-1)q-k\big)\ac{m,k}{m,i}=\big((m-1)q-i\big)\ac{m+1,k}{m+1,i} \text{ for } m\neq-1. \]
Considering $i=0$, for particular values of $m$, we express $\ac{3,k}{3,0}$ and $\ac{5,k}{5,0}$ in terms of $\ac{2,k}{2,0}$. Then, substituting $n=3$, $j=0$, $m=2$, $i=0$, and $d=5$ into \eqref{main}, we obtain
\[ \frac{(k+q)(k+2q)(k+3q)}{6q^2}\ac{2,k}{2,0}=0. \]
Similarly to the proof of Theorem \ref{th8}, this yields the relation
\[ \ac{2,k}{2,0}=0 \text{ for } k\notin\{-q;-2q;-3q\}. \]
The cases $k=-2q$ and $k=-3q$ are treated literally as the subcases $(2q\in\Z)$ and $(3q\in\Z)$ in the proof of Theorem \ref{th8}.
Since $q\in\Z$, unlike in the proof of Theorem \ref{th8}, it is necessary to explicitly show that $\ac{2,-q}{2,0}=0$. For this purpose, setting $i=0$ and $k=-q$ in \eqref{twelve}, we have
\[ m\ac{m,-q}{m,0}=(m-1)\ac{m+1,-q}{m+1,0} \text{ for } m\neq-1. \]
It follows that
\[ \ac{m,-q}{m,0}=(m-1)\ac{2,-q}{2,0} \text{ for } m\geq1;\qquad \ac{m,-q}{m,0}=\tfrac{1-m}{2}\ac{-1,-q}{-1,0} \text{ for } m\leq-1. \]
Putting $n=-1$, $j=0$, $m=2$, $i=0$, $d=1$, and $k=-q$ in \eqref{main}, we get $\ac{-1,-q}{-1,0}=-2\ac{2,-q}{2,0}$. Hence, we obtain $\ac{m,-q}{m,0}=(m-1)\ac{2,-q}{2,0}$ for all $m\in\Z$.
Then, putting $m=4$, $n=2$, $i=q$, $j=0$, $k=0$, and $d=6$ in \eqref{main}, we get $\ac{4,0}{4,q}=0$.
Next, setting $m=2$, $n=2$, $i=q$, $j=0$, $k=0$, and $d=4$ in \eqref{main}, we get $\ac{2,-q}{2,0}=0$.
Thus, we have established that $\ac{2,k}{2,0}=0$ for all $k\in\Z$.
Proceeding analogously to the proof of Theorem \ref{th8}, by evaluating the main identity \eqref{main} for suitable choices of indices such as $n=-1$, $j=0$, $i=0$, and $m=2$, $d=1$ (or $m=3$, $d=2$) and combining the resulting relations, we obtain
\[ (2k+3q)\ac{-1,k}{-1,0}=0 \text{ and } (3k+4q)\ac{-1,k}{-1,0}=0, \text{ which implies } \ac{-1,k}{-1,0}=0 \text{ for all } k\in\Z. \]
Substituting $n=1$, $j=0$, $d=m+1$ and $n=-1$, $j=0$, $d=m-1$ into the main identity \eqref{main} yields
\begin{align*}
\big((m-1)q-k\big)\ac{m,k}{m,i} &= \big((m-1)q-i\big)\ac{m+1,k}{m+1,i}\quad\text{for}\quad m\neq-1,\\
\big((m+1)q+k\big)\ac{m,k}{m,i} &= \big((m+1)q+i\big)\ac{m-1,k}{m-1,i}\quad\text{for}\quad m\neq1.
\end{align*}
Considering these equations as a linear system of equations, we obtain
\[ \ac{m,k}{m,i}=0 \text{ for } m\notin\{-1;0\},\ k\neq-i-3q. \]
Similarly, for $k=j-i-3q$, by applying the same argument as in the proof of the above theorem, we obtain $\ac{m,k}{m,i}=0$.
Finally, substituting $m=0,n\neq0$ and $m=-1,n\neq0,1$ into \eqref{main} yields $\ac{0,k}{0,i}=0$ and $\ac{-1,k}{-1,i}=0$. Hence, we obtain $\ac{m,k}{m,i}=0$ for any $m\in\Z$ and $k\neq i$.

\item[Step 3.] As a final step, we show that $\ac{m,i}{m,i}=0$ for all $m,i\in\Z$.
Substituting $d=m+n$ and $k=i+j$ into \eqref{main}, the equation becomes
\[ \ac{m,i}{m,i}+\ac{n,j}{n,j}=\acp{m+n,i+j}{m+n,i+j} \text{ for } ni-mj+(n-m)q\neq0. \]
Substituting successively $(n,j)=(0,0)$, $(n,j)=(1,0)$, and $(n,j)=(1,1)$ into this equation, we obtain
\begin{align*}
\ac{m,i}{m,i} &= \acp{m,i}{m,i} &&\text{for } m\neq0;\\
\ac{m,i}{m,i} &= \acp{m+1,i}{m+1,i} &&\text{for } i\neq q(m-1);\\
\ac{m,i}{m,i} &= \acp{m+1,i+1}{m+1,i+1} &&\text{for } i\neq q(m-1)+m.
\end{align*}
As established at the beginning of the proof, we have $\ac{0,-q}{0,-q}=0$. Proceeding as in Step 3 of the proof of Theorem \ref{th8}, we obtain $\ac{m,i}{m,i}=0$ for all $i\neq q(m-1)$.
It remains to consider the case $i=q(m-1)$, $m\neq0$. 

(The argument via the third relation above does not work for $q=1$, since then $\big(m+1,q(m-1)+1\big)=\big(m+1,qm\big)$ is again an exceptional pair, and for $m=-1$.) If $m\neq-2$, we take $(n,j)=(2,0)$: then $ni-mj+(n-m)q=qm\neq0$, and since $\ac{2,0}{2,0}=0$ and $\acp{m+2,q(m-1)}{m+2,q(m-1)}=\ac{m+2,q(m-1)}{m+2,q(m-1)}=0$ (because $q(m-1)\neq q(m+1)$), we obtain $\ac{m,q(m-1)}{m,q(m-1)}=0$. If $m=-2$, we take $(n,j)=(3,0)$: then $ni-mj+(n-m)q=-4q\neq0$, and $\ac{3,0}{3,0}=0$, $\acp{1,-3q}{1,-3q}=\ac{1,-3q}{1,-3q}=0$ give $\ac{-2,-3q}{-2,-3q}=0$.
Therefore, we conclude that $\ac{m,i}{m,i}=0$ for all $m,i\in\Z$.
\end{description}
\end{proof}

\subsection{Corollaries}
Summarizing the results obtained in Subsections \ref{th7-sub}--\ref{th9-sub}, we have the following statements.

\begin{maintheorem}
$\QDer\big(\B(q)\big)=\langle\ad_{L_{d,k}}\mid d\neq0,\ k\in\Z\rangle\oplus\langle\id\rangle\oplus\langle p_k\rangle\oplus\langle h_{d,k}\rangle\oplus\AnnQ\big(\B(q)\big)$\footnote{$\AnnQ\big(\B(q)\big)=0$ if $q\notin\Z$.}.
\end{maintheorem}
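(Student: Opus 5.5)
The Main Theorem is a summary statement. I would prove it by splitting on $q$ into the three cases already handled, $q=0$, $q\notin\Z$ and $q\in\Z^{*}$, and invoking Theorems \ref{th7}, \ref{th8} and \ref{th9} respectively.

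\emph{Case $q=0$.} Theorem \ref{th7} gives exactly the stated decomposition, with $\AnnQ\big(\B(0)\big)$ spanned by the maps $g_{d,k}$ into $\langle L_{0,0}\rangle$.

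\emph{Case $q\in\Z^{*}$.} Theorem \ref{th9} gives the same formula verbatim.

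\emph{Case $q\notin\Z$.} Theorem \ref{th8} gives the decomposition without the last summand. By the Proposition recalled from \cite{15,20}, $\Ann\big(\B(q)\big)=0$ for such $q$, hence $\AnnQ\big(\B(q)\big)=0$. So the formula in the Main Theorem, with the footnote convention $\AnnQ\big(\B(q)\big)=0$, coincides with Theorem \ref{th8}.

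For the inclusion ``$\supseteq$'', the proposition on $p_k$ and $h_{d,k}$ shows these maps are quasi-derivations. Inner derivations and $\id$ are quasi-derivations: $\id$ has related map $2\,\id$. Elements of $\AnnQ$ are quasi-derivations with related map $0$. Since $\QDer$ is a subspace, every finite combination lies in it. Possibly infinite formal sums of the $p_k$, $h_{d,k}$, $\ad_{L_{d,k}}$ and $g_{d,k}$ also lie in it, because on each basis element $L_{m,i}$ only finitely many of them act nonzero.

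The only point needing care is that the sum is really direct, uniformly in $q$; this is where I expect the main (mild) obstacle. The cleanest check uses the $\Z^2$-grading from the Remark in Section 2. Each homogeneous component $f_{(a,b)}$, of degree $(a,b)$, is a quasi-derivation, and the families contribute in these degrees:
\begin{itemize}
\item $\id$ in degree $(0,0)$;
\item $p_k$ in degree $(0,k-1)$;
\item $h_{d,k}$ in degree $(d-1,k)$;
\item $\ad_{L_{d,k}}$ in degree $(d,k)$;
\item $g_{d,k}$ in degree $(-d,-q-k)$.
\end{itemize}
Within a fixed degree, one compares the explicit coefficient functions on $L_{m,i}$: these are $1$, $(i+q)$, $m$, $\bigl(d(i+q)-m(k+q)\bigr)$ and a delta function. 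They are linearly independent as functions of $(m,i)$, apart from the identities listed in the proposition, which relate them to $D_1$, $D_2$, $D_3$, $D_4$. The restriction $d\neq0$ on the $\ad_{L_{d,k}}$ removes the overlap $h_{1,k}=\frac{1}{k+q}\ad_{L_{0,k}}$. The remaining overlaps, e.g.\ $p_1=D_1+q\,\id$ and $\ad_{L_{1,k}}$ versus $h_{2,k}$, are not genuine dependencies, since the functions $1$, $i+q$, $m$, $d(i+q)-m(k+q)$ stay independent once $d\neq0$ is enforced. The delta-supported $g_{d,k}$ cannot cancel a nonzero polynomial coefficient. Finally, the normalizations in the three theorems already determine the coefficients $\beta,\gamma,\lambda,\mu,\sigma$ uniquely from $f$, which confirms directness.
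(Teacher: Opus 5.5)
Your reduction is exactly the paper's. The Main Theorem is stated there only as a summary of Theorems~\ref{th7}, \ref{th8} and~\ref{th9}. Your degree-by-degree independence check is also correct: in degree $(a,b)$ the coefficient functions are $1$, $i+q$, $m$, $m(b+q)-a(i+q)$ with $a\neq0$, and a point mass at $(-a,-q-b)$. This justifies the paper's remark that excluding $\ad_{L_{0,k}}$ is what makes the sum direct.

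However, one assertion in your ``$\supseteq$'' paragraph is false and should be removed. Infinite formal sums of the $p_k$, $h_{d,k}$, $\ad_{L_{d,k}}$ do \emph{not} lie in $\QDer\big(\B(q)\big)$, because it is not true that only finitely many of them act nonzero on a given $L_{m,i}$. That local finiteness holds only for the $g_{d,k}$, since exactly one of them is nonzero on each $L_{m,i}$. For the other families:
\begin{itemize}
\item $p_k(L_{m,i})=(i+q)L_{m,k+i-1}\neq0$ for every $k$ once $i\neq-q$;
\item $h_{d,k}(L_{m,i})=mL_{d+m-1,k+i}\neq0$ for all $(d,k)$ once $m\neq0$;
\item $[L_{d,k},L_{0,i}]=-d(i+q)L_{d,k+i}\neq0$ for all $d\neq0$ once $i\neq-q$.
\end{itemize}
So, for example, $\sum_k p_k$ sends every $L_{m,i}$ with $i\neq-q$ to an infinite sum and is not an endomorphism of $\B(q)$. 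This is precisely why the paper stresses that only \emph{finite} combinations of $\ad_{L_{d,k}}$, $\id$, $p_k$, $h_{d,k}$ occur. Only an element of $\AnnQ\big(\B(q)\big)$ may be an infinite sum of the $g_{d,k}$.

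On the ``$\subseteq$'' side, this finiteness is built into the normalizations of Theorems~\ref{th7}--\ref{th9}. The coefficients $\beta_{d,k}$, $\lambda_k$, $\mu_{d,k}$ are read off from finitely many finitely supported vectors such as $f(L_{0,0})$, $f(L_{0,1})$, $f(L_{1,0})$, $f(L_{1,1})$. By contrast, $\sigma_{d,k}$ is read off from the $L_{0,-q}$-coordinate of $f(L_{d,k})$ for every $(d,k)$.

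Finally, your closing remark that the normalizations ``confirm directness'' only shows that suitable coefficients exist, not that they are unique. It is your graded independence argument that actually proves the sum is direct.
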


Here we write the inner derivations in the form $\langle\ad_{L_{d,k}}\mid d\neq0\rangle$ in order to make the sum direct: since $\ad_{L_{0,k}}=(k+q)h_{1,k}$, we have $\Inn\big(\B(q)\big)=\langle\ad_{L_{d,k}}\mid d\neq0\rangle\oplus\langle h_{1,k}\mid k\neq-q\rangle$.
In the Main Theorem, only finite linear combinations of $\ad_{L_{d,k}}$, $\id$, $p_k$, and $h_{d,k}$ occur (each quasi-derivation is a linear map of $\B(q)$ into itself), while an element of $\AnnQ\big(\B(q)\big)$ may be an infinite formal sum.

\begin{definition}
A nonzero $\delta$-derivation $\varphi$ is nontrivial if $\delta\notin\{0;1\}$ and $\varphi\notin\langle\id\rangle$.
\end{definition}

\begin{corollary}\label{cor11}
If $q\notin\Z$, then $\B(q)$ does not admit nontrivial $\delta$-derivations.
\end{corollary}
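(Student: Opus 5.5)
Let $\varphi$ be a $\delta$-derivation of $\B(q)$ with $\delta\notin\{0;1\}$ and $q\notin\Z$. By the remark after the definition of quasi-derivations, $f:=\delta\varphi$ is a quasi-derivation with related map $f'=\varphi$. Theorem~\ref{th8} then gives a finite decomposition
\[
\delta\varphi=\sum_{d\neq0}\beta_{d,k}\ad_{L_{d,k}}+\gamma\,\id+\sum_k\lambda_kp_k+\sum_{d,k}\mu_{d,k}h_{d,k}.
\]
For each summand the related map is known explicitly: $\ad_{L_{d,k}}$ (a derivation) has itself, $\id$ has $2\,\id$, $p_k$ has $p'_k$, and $h_{d,k}$ has $h'_{d,k}$. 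Related maps are not unique in general, but $\varphi$ is. Indeed, $f'$ is determined on $[\B(q),\B(q)]$, and $\B(q)$ is perfect for $q\notin\Z$: from $[L_{0,0},L_{m,i}]=q\,m\,L_{m,i}$ and $[L_{0,1},L_{0,j}]=(j-1)\cdot 0=0$ this is not immediate for $m=0$, but $[L_{1,0},L_{-1,i}]=(-(q)-(i+q))L_{0,i}=-(i+2q)L_{0,i}\neq0$. Hence
\[
\varphi=\sum\beta_{d,k}\ad_{L_{d,k}}+2\gamma\,\id+\sum\lambda_kp'_k+\sum\mu_{d,k}h'_{d,k}.
\]

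Multiplying by $\delta$ and comparing with the first expression gives
\[
\sum(\delta-1)\beta_{d,k}\ad_{L_{d,k}}+(2\delta-1)\gamma\,\id+\sum\lambda_k(\delta p'_k-p_k)+\sum\mu_{d,k}(\delta h'_{d,k}-h_{d,k})=0.
\]
We then evaluate this on basis elements and use the $\Z^2$-grading. Both $p_k$ and $p'_k$ have degree $(0,k-1)$, and $h_{d,k}$, $h'_{d,k}$ have degree $(d-1,k)$. The degree-$(0,0)$ part collects $\id$, $p_1$ and $h_{1,0}=D_2$, which is $\tfrac1q\ad_{L_{0,0}}$.

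In each fixed degree we compare coefficients of $L_{m,i}$ as polynomial functions of $(m,i)$.
\begin{itemize}
\item In degree $(a,b)$ with $a\neq 0$, the $\ad$ term $\beta\,(m(b+q)-a(i+q))$, combined with $\mu\,(\delta(m+a)-m)$, has to vanish identically. The coefficient of $i$ forces $\beta=0$ (since $\delta\neq1$), and the remaining terms then force $\mu=0$.
\item In degree $(0,b)$ with $b\neq 0$, the $\lambda$ term contributes $\lambda\,(\delta(b+i+2q)-(i+q))$ and the $\mu$ term (with $d=1$) contributes $\mu(\delta-1)m$. Hence $\lambda=\mu=0$.
\item In degree $(0,0)$, the coefficient $(2\delta-1)\gamma+\lambda(\delta(i+2q)-(i+q))+\mu(\delta-1)m$ vanishing for all $(m,i)$ forces $\mu=0$ and $\lambda=0$, and then $\gamma=0$ unless $\delta=\tfrac12$.
\end{itemize}
So $\varphi=2\gamma\,\id$ when $\delta=\tfrac12$, and $\varphi=0$ otherwise; in either case $\varphi\in\langle\id\rangle$. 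One can double-check $\delta=\tfrac12$ against the earlier Proposition, which gives $\Der_{1/2}(\B(q))=\langle\id\rangle$.

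The main obstacle is bookkeeping the overlaps in degree $(0,\cdot)$, where $h_{1,k}$ coincides up to a scalar with $\ad_{L_{0,k}}$. The sums in Theorem~\ref{th8} are direct, so these can be treated as single $\mu$ terms, provided the decomposition is used as literally stated with $\ad_{L_{d,k}}$ only for $d\neq0$.
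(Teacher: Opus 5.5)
Your route is the one the paper intends. The paper gives no separate proof and reads the corollary off Theorem~\ref{th8}, using that $\delta\varphi$ is a quasi-derivation with related map $\varphi$. Your degree-by-degree coefficient comparison is also correct.

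The gap is the claim that $\B(q)$ is perfect for every $q\notin\Z$. You use it to identify $\varphi$ with $F':=\sum\beta_{d,k}\ad_{L_{d,k}}+2\gamma\,\id+\sum\lambda_kp'_k+\sum\mu_{d,k}h'_{d,k}$ on all of $\B(q)$. But every bracket landing in degree $(0,I)$ has the form $[L_{m,i},L_{-m,I-i}]=-m(I+2q)L_{0,I}$. So for $q\in\frac12+\Z$ the element $L_{0,-2q}$ does not lie in $[\B(q),\B(q)]$. Your own coefficient $-(i+2q)$ vanishes at $i=-2q$, and the paper's closing Remark uses exactly this element. For such $q$ the related map of $f$ is not unique, and your displayed formula for $\varphi$ is unjustified at $L_{0,-2q}$. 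For $q\notin\frac12\Z$ your argument is fine as written.

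The repair is short, because you never need uniqueness of the related map: $\varphi=\delta^{-1}f$ in any case. What the $\delta$-derivation property gives is $\delta^{-1}f([x,y])=[f(x),y]+[x,f(y)]=F'([x,y])$. That is, $f=\delta F'$ on every $L_{m,i}$ with $(m,i)\neq(0,-2q)$.

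In each degree, the coefficient you compare is an affine function of $(m,i)$. An affine function vanishing on $\Z^2$ minus a single point vanishes identically. So your three cases still give $\beta=\lambda=\mu=0$ and $(2\delta-1)\gamma=0$. Then $\varphi=\delta^{-1}\gamma\,\id\in\langle\id\rangle$ directly, and you never need to know $\varphi(L_{0,-2q})$ separately.
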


\begin{corollary}\label{cor12}
If $q\in\Z$ and $\varphi$ is a nontrivial $\delta$-derivation of $\B(q)$, then $\varphi\in\langle D_5\rangle$ if $\delta\neq\frac12$ and $\varphi\in\langle\id,D_5\rangle$ if $\delta=\frac12$, where
\[ D_5(L_{0,-2q})=L_{0,-q}. \]
\end{corollary}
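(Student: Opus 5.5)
The plan is to read off $\delta$-derivations from the Main Theorem (Theorem \ref{th7} for $q=0$ and Theorem \ref{th9} for $q\in\Z^{*}$). If $\varphi$ is a $\delta$-derivation with $\delta\notin\{0;1\}$, then $f=\delta\varphi$ is a quasi-derivation whose related map is $f'=\varphi$. Since the defining identity is homogeneous for the $\Z^2$-grading (see the Remark before Theorem \ref{th7}), every homogeneous component of $\varphi$ is again a $\delta$-derivation. It therefore suffices to treat homogeneous $\varphi$, writing
\[
\delta\varphi=\beta\,\ad_{L_{d,k}}+\gamma\,\id+\lambda p_k+\mu h_{d',k'}+g ,
\qquad g\in\AnnQ\big(\B(q)\big),
\]
where only the summands of the relevant degree survive.

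Next, compute the related map of each generator and compare it with $\delta^{-1}f$. For $\ad_{L_{d,k}}$ the related map is $\ad_{L_{d,k}}$ itself. For $\id$ it is $2\,\id$. For $p_k$ and $h_{d,k}$ the related maps $p'_k$ and $h'_{d,k}$ are the explicit formulas given in the Proposition. For $g\in\AnnQ$ the related map may be anything that vanishes on $[\B(q),\B(q)]$. The requirement $\varphi=\delta^{-1}(\delta\varphi)$ applied to a basis vector $[L_{m,i},L_{n,j}]$ of the derived algebra then becomes a condition on coefficients in each degree.

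\begin{itemize}
\item In degree $(0,0)$ the candidates are $\id$, $p_1=D_1+q\,\id$ and $h_{1,0}=D_2$. The condition reads $\gamma\cdot 2+\lambda(i+2q)+\mu(m)=\delta^{-1}\big(\gamma+\lambda(i+q)+\mu m\big)$ for all $(m,i)$ in the relevant range, which forces $\lambda=\mu=0$ whenever $\delta\neq1$. What remains is $2\gamma=\gamma/\delta$, so $\gamma\neq0$ only when $\delta=\frac12$.
\item In every other degree, the related map of $p_k$ has eigenvalue $(k+i+2q-1)$ while $p_k$ has $(i+q)$. Comparing the coefficient ratio at two values of $i$ gives $\delta=1$ unless $\lambda=0$. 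The same argument works for $h_{d,k}$ using $(d+m-1)$ versus $m$, and for $\ad$, whose ratio is $1$.
\end{itemize}

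After this, only $g\in\AnnQ$ is left. We have $\delta\varphi=g$, which has image in $\langle L_{0,-q}\rangle$, and $\varphi$ must agree with $\delta^{-1}g$ on the derived algebra. Write $g=\sum\sigma_{d,k}g_{d,k}$ and impose the $\delta$-derivation identity $\varphi[x,y]=\delta([\varphi x,y]+[x,\varphi y])$. The right-hand side vanishes because $L_{0,-q}$ is central, so $g([x,y])=0$ for all $x,y$. Hence $g$ vanishes on $[\B(q),\B(q)]$. That derived algebra is spanned by all $L_{m,i}$ except $L_{0,-2q}$: from the multiplication table, $[L_{m,i},L_{n,j}]$ can reach $L_{0,-2q}$ only with coefficient $n(i+q)-m(j+q)$ where $n=-m$ and $i+j=-2q$, i.e.\ $-m(i+q)-m(j+q)=0$. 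I expect this to be the main check: one must verify that every other $L_{m,i}$ actually lies in the derived algebra, using $[L_{0,0},L_{m,i}]=-mqL_{m,i}$ and $[L_{m,0},L_{0,i}]$-type brackets, including the cases $m=0$ and $q=0$. Granting this, only $\sigma_{0,-2q}$ survives, so $g\in\langle D_5\rangle$, and $D_5$ is a $\delta$-derivation for every $\delta$ by the first Proposition.

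Combining the degree $(0,-q)$ part with the degree $(0,0)$ part gives $\varphi\in\langle\id,D_5\rangle$ when $\delta=\frac12$ and $\varphi\in\langle D_5\rangle$ otherwise. Nontriviality excludes the case $\varphi\in\langle\id\rangle$ but does not otherwise change the conclusion. The step I expect to be delicate, beyond the derived-algebra check, is the case analysis at exceptional indices such as $i=-q$ or $m=0$ in the eigenvalue comparison. I would handle it by always choosing two generic basis vectors.
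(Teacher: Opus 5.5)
Your proposal is correct and follows the paper's route. The paper gives Corollary~\ref{cor12} as a direct consequence of the Main Theorem, and the intended argument is exactly yours: view $\delta\varphi$ as a quasi-derivation with related map $\varphi$, compare related maps degree by degree, and use that $[\B(q),\B(q)]$ is spanned by all $L_{m,i}$ except $L_{0,-2q}$, so that only $g_{0,-2q}=D_5$ survives from $\AnnQ\big(\B(q)\big)$. A few slips need fixing. $D_5$ is homogeneous of degree $(0,q)$, not $(0,-q)$. The bracket is $[L_{0,0},L_{m,i}]=mq\,L_{m,i}$, not $-mq\,L_{m,i}$. In a fixed degree $(a,b)$ the generators $\ad_{L_{a,b}}$, $p_{b+1}$, $h_{a+1,b}$ (and $\id$) must be treated jointly rather than one at a time, which your comparison of the coefficients of $i$ and $m$ already does without change.
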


\begin{definition}
A bilinear map $\varphi$ on a Lie algebra $(\mathfrak{L},[\cdot,\cdot])$ is called a $\delta$-biderivation if
\begin{align*}
\varphi\big(x,[y,z]\big) &= \delta\big([\varphi(x,y),z]+[y,\varphi(x,z)]\big),\\
\varphi\big([x,y],z\big) &= \delta\big([\varphi(x,z),y]+[x,\varphi(y,z)]\big).
\end{align*}
A nonzero $\delta$-biderivation $\varphi$ is called nontrivial if $\delta\notin\{0;1\}$ and $\varphi\neq\lambda[\cdot,\cdot]$, where $\lambda\in\C$.
\end{definition}

\begin{corollary}
If $\varphi$ is a nontrivial $\delta$-biderivation on $\B(q)$, then $q\in\Z$ and
\[ \varphi\big(L_{0,-2q},L_{0,-2q}\big)=\mu L_{0,-q},\quad \mu\in\C^{*}, \]
and $\varphi$ vanishes on all other pairs of basis elements.
\end{corollary}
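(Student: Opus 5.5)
Fix a nontrivial $\delta$-biderivation $\varphi$ on $\B(q)$, so $\delta\notin\{0;1\}$. For each $x\in\B(q)$ put $\varphi_x:=\varphi(x,\cdot)$. The first identity says exactly that $\varphi_x$ is a $\delta$-derivation of $\B(q)$. Likewise, for each $z$ the map $\psi_z:=\varphi(\cdot,z)$ satisfies $\psi_z[x,y]=\delta([\psi_z(x),y]+[x,\psi_z(y)])$, so it is a $\delta$-derivation as well. We then apply Corollaries \ref{cor11} and \ref{cor12} to both families.

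If $q\notin\Z$, Corollary \ref{cor11} gives $\varphi_x=\lambda(x)\,\id$ for a linear functional $\lambda$, hence $\varphi(x,y)=\lambda(x)y$. Also $\psi_z=\nu(z)\id$, so $\varphi(x,y)=\nu(y)x$. Thus $\lambda(x)y=\nu(y)x$ for all $x,y$. Taking $x,y$ linearly independent forces $\lambda=\nu=0$, so $\varphi=0$, contradicting nontriviality.

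If $q\in\Z$ and $\delta\neq\frac12$, Corollary \ref{cor12} (together with the trivial members $\langle \id\rangle$, which are $\delta$-derivations only when $\delta=1$ or they vanish) gives $\varphi_x=\lambda(x)D_5$ and $\psi_z=\nu(z)D_5$. Hence $\varphi(x,y)=\lambda(x)D_5(y)$ is nonzero only when $y$ has a nonzero $L_{0,-2q}$-component. Symmetrically, it is nonzero only when $x$ has such a component. Evaluating on basis pairs, $\varphi(L_{m,i},L_{n,j})=0$ unless $(m,i)=(n,j)=(0,-2q)$, and $\varphi(L_{0,-2q},L_{0,-2q})=\mu L_{0,-q}$. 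Conversely, this $\varphi$ is a $\delta$-biderivation. The reason is that $L_{0,-2q}\notin[\B(q),\B(q)]$: the structure constant $n(i+q)-m(j+q)$ with $m+n=0$, $i+j=-2q$ equals $-m(i+j+2q)=0$. So the left sides vanish, and the right sides vanish because $L_{0,-q}$ is central and $\varphi(x,y)\in\langle L_{0,-q}\rangle$. Nontriviality forces $\mu\neq0$, and $\varphi$ is not a multiple of the bracket because $[L_{0,-2q},L_{0,-2q}]=0$.

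The main obstacle is the case $\delta=\frac12$, where $\varphi_x=\lambda(x)\id+\lambda'(x)D_5$ and $\psi_z=\nu(z)\id+\nu'(z)D_5$. Comparing gives
\[\lambda(x)y+\lambda'(x)D_5(y)=\nu(y)x+\nu'(y)D_5(x).\]
Take $x=L_{m,i}$ and $y=L_{n,j}$ distinct and both different from $L_{0,-2q}$. Then the $D_5$-terms vanish and independence gives $\lambda(L_{m,i})=0$. Taking $y=L_{0,-2q}$ and $x$ generic forces $\lambda(x)=0$ too, by comparing coefficients of $L_{0,-2q}$ against $x$ and $L_{0,-q}$ (choose $x\neq L_{0,-q}$ first; then handle $x=L_{0,-q}$ by choosing a different $y$). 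One last check is $\lambda(L_{0,-2q})$: with $x=y=L_{0,-2q}$, the $\id$-parts must still vanish by pairing with another $y$. After that we are reduced to the previous case, and the $\mu$-form follows.
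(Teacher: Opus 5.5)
Your proposal is correct and follows the intended route. Both $\varphi(x,\cdot)$ and $\varphi(\cdot,z)$ are $\delta$-derivations, so Corollaries~\ref{cor11} and~\ref{cor12} apply to each, and comparing the two descriptions on pairs of basis elements kills the $\id$-parts and leaves only $\varphi(L_{0,-2q},L_{0,-2q})=\mu L_{0,-q}$.

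There is one small slip: a nonzero multiple of $\id$ is a $\delta$-derivation exactly when $\delta=\frac12$, not $\delta=1$. That is why it drops out in your branch $\delta\neq\frac12$, so the conclusion there stands.
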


\begin{corollary}\label{cor15}
If $\cdot$ defines a nontrivial transposed $\delta$-Poisson structure on $\B(q)$ with $\delta\notin\{0;1\}$, then $q\in\Z$ and $L_{0,-2q}\cdot L_{0,-2q}=\mu L_{0,-q}$, $\mu\in\C^{*}$, and all other products of basis elements are zero. In particular, up to isomorphism, there is only one nontrivial transposed $\delta$-Poisson structure $\cdot$ on $\B(q)$, $q\in\Z$, $\delta\notin\{0;1\}$; it is given by $L_{0,-2q}\cdot L_{0,-2q}=L_{0,-q}$.
\end{corollary}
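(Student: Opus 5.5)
By Lemma \ref{lem4}, every right multiplication $R_z\colon x\mapsto x\cdot z$ of the commutative associative product is a $\delta$-derivation of $\B(q)$. Since $\delta\notin\{0;1\}$, Corollaries \ref{cor11} and \ref{cor12} tell us exactly what $R_z$ can be. If $q\notin\Z$, each $R_z$ is a scalar multiple of $\id$. If $q\in\Z$, each $R_z$ lies in $\langle D_5\rangle$ when $\delta\neq\frac12$, and in $\langle\id,D_5\rangle$ when $\delta=\frac12$. So I will write $R_z=\alpha(z)\,\id+\beta(z)\,D_5$, where $\alpha,\beta$ are linear functionals, $\alpha\equiv0$ unless $\delta=\frac12$, and $\beta\equiv0$ when $q\notin\Z$.

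\textbf{Step 1: $\alpha\equiv0$.} Commutativity gives $x\cdot z=z\cdot x$, that is,
\[\alpha(z)x+\beta(z)D_5(x)=\alpha(x)z+\beta(x)D_5(z).\]
The image of $D_5$ is contained in $\langle L_{0,-q}\rangle$. Take $x,z$ linearly independent modulo $\langle L_{0,-q}\rangle$; comparing coefficients forces $\alpha(x)=\alpha(z)=0$. Every basis element can be placed in such a pair, so $\alpha$ vanishes on all basis elements, and hence on $\B(q)$. Consequently, for $q\notin\Z$ the product is identically zero, so there is no nontrivial structure.

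\textbf{Step 2: $q\in\Z$, determining $\beta$.} Now $x\cdot z=\beta(z)D_5(x)$, and $D_5(x)$ is $L_{0,-q}$ times the $L_{0,-2q}$-coordinate $c(x)$ of $x$. Thus $x\cdot z=\beta(z)c(x)L_{0,-q}$. Symmetry gives $\beta(z)c(x)=\beta(x)c(z)$ for all $x,z$, so $\beta=\mu c$ for some constant $\mu$. Hence
\[x\cdot z=\mu\,c(x)c(z)L_{0,-q},\]
which is exactly the structure $L_{0,-2q}\cdot L_{0,-2q}=\mu L_{0,-q}$ with all other products of basis elements zero. Nontriviality forces $\mu\neq0$.

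\textbf{Step 3: this product is a transposed $\delta$-Poisson structure.} Associativity holds because every triple product vanishes: since $-q\neq-2q$ when $q\neq0$, we have $c(L_{0,-q})=0$. For $q=0$ the product reads $L_{0,0}\cdot L_{0,0}=\mu L_{0,0}$, which is not nilpotent, so this case needs separate care. Identity \eqref{tdp} holds because $R_z\in\langle D_5\rangle\subseteq\Der_\delta$ for every $\delta$, by the earlier proposition.

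\textbf{Step 4: uniqueness up to isomorphism.} The rescaling $L_{m,i}\mapsto tL_{m,i}$ is an automorphism of the product $\cdot$ in the sense that it multiplies the Lie bracket by $t$. To normalize $\mu$ while keeping the Lie bracket fixed, I would instead use the automorphism of $\B(q)$ given by $L_{m,i}\mapsto s^{i}L_{m,i}$, which respects \eqref{mult} because the indices $i$ add. Under it, $\mu$ becomes $\mu s^{-2q}/s^{-q}\cdot$(appropriate power), i.e.\ $\mu$ is multiplied by $s^{-3q}$ or a similar nonzero power of $s$. Choosing $s$ suitably normalizes $\mu=1$.

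The main obstacle is the degenerate case $q=0$ in Steps 3 and 4. There $L_{0,-q}=L_{0,-2q}=L_{0,0}$, and the grading automorphism $s^{i}$ does not rescale $\mu$. I would handle it by checking the structure directly and, if necessary, composing with an overall scalar multiplication of the product $\cdot$ to normalize $\mu$.
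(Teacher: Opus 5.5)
Your route matches the paper's: Lemma~\ref{lem4}, Corollaries~\ref{cor11} and~\ref{cor12} and commutativity pin down the product; $D_5\in\Der_\delta$ for every $\delta$ gives the converse; and the grading automorphism $L_{m,i}\mapsto s^iL_{m,i}$ normalizes $\mu$ when $q\neq0$.

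\textbf{The gap: uniqueness for $q=0$.} The Corollary explicitly covers $q=0$, since $0\in\Z$. Your proposed fix, ``composing with an overall scalar multiplication of the product $\cdot$'', is not an isomorphism. An isomorphism of transposed $\delta$-Poisson algebras is a linear bijection preserving both $\cdot$ and $[\cdot,\cdot]$, and replacing $\cdot$ by $\lambda\cdot$ is not induced by such a map in general. The obvious candidate $t\,\id$ is not a Lie automorphism for $t\neq1$, and every grading automorphism $L_{m,i}\mapsto a^mb^iL_{m,i}$ fixes $L_{0,0}$.

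The missing idea is a non-graded automorphism. In $\B(0)$ the element $L_{0,0}$ is central. Also $L_{0,0}\notin[\B(0),\B(0)]$, because the only brackets landing in degree $(0,0)$ are $[L_{m,i},L_{-m,-i}]=(-mi+mi)L_{0,0}=0$. Hence the map $\phi$ with $\phi(L_{0,0})=\mu L_{0,0}$ and $\phi(L_{m,i})=L_{m,i}$ for $(m,i)\neq(0,0)$ is a Lie automorphism. It satisfies $\phi(x\cdot_\mu y)=\mu^2c(x)c(y)L_{0,0}=\phi(x)\cdot_1\phi(y)$. This is exactly how the paper normalizes $\mu$ for $q=0$.

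\textbf{Smaller points.}
\begin{itemize}
\item Step~3 for $q=0$ is not delicate: $(x\cdot y)\cdot z=\mu^2c(x)c(y)c(z)L_{0,0}$ is symmetric in $x,y,z$, so associativity holds.
\item In Step~1, for $q\in\Z$ the basis element $L_{0,-q}$ cannot be placed in a pair that is independent modulo $\langle L_{0,-q}\rangle$. Instead take $x=L_{0,-q}$ and any $z\notin\langle L_{0,-q}\rangle$. Then $\alpha(z)=0$, and the commutativity relation gives $\alpha(L_{0,-q})z\in\langle L_{0,-q}\rangle$, so $\alpha(L_{0,-q})=0$.
\item In Step~4 the exponent should be computed rather than guessed. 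We have $\theta(L_{0,-2q})\cdot_1\theta(L_{0,-2q})=s^{-4q}L_{0,-q}$ and $\theta(\mu L_{0,-q})=\mu s^{-q}L_{0,-q}$. So $s^{3q}=\mu^{-1}$ works, and this is solvable precisely because $q\neq0$.
\end{itemize}
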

\begin{proof}
By Lemma~\ref{lem4}, every right multiplication $R_z$ is a $\delta$-derivation, so the statement follows from Corollaries~\ref{cor11} and~\ref{cor12} and the commutativity of $\cdot$ (associativity is not needed for this part). Conversely, since $L_{0,-q}$ is central and $L_{0,-2q}\notin[\B(q),\B(q)]$, the product $L_{0,-2q}\cdot L_{0,-2q}=\mu L_{0,-q}$ satisfies \eqref{tdp} for every $\delta$. For $q\neq0$, the automorphism $\theta\colon L_{m,i}\mapsto b^{i}L_{m,i}$ of $\B(q)$ with $b^{3q}=\mu^{-1}$ satisfies $\theta(x\cdot_\mu y)=\theta(x)\cdot_1\theta(y)$, where $\cdot_\mu$ denotes the structure with parameter $\mu$; for $q=0$, the same is done by the automorphism $L_{0,0}\mapsto\mu L_{0,0}$, $L_{m,i}\mapsto L_{m,i}$ for $(m,i)\neq(0,0)$ (recall that $L_{0,0}$ is central and $L_{0,0}\notin[\B(0),\B(0)]$).
\end{proof}

\begin{remark}
The restriction $\delta\notin\{0;1\}$ in Corollary~\ref{cor15} cannot be removed. For example, let $\delta=0$ and $2q=n\in\Z$, and put $e=L_{0,-n}$. Then $e\notin[\B(q),\B(q)]$, so the product $e\cdot e=e$ (all other products of basis elements being zero) is commutative and associative and satisfies $x\cdot[y,z]=0$, i.e., it is a nontrivial transposed $0$-Poisson structure on $\B(q)$. For $q=\frac12$ this contradicts the conclusion $q\in\Z$, and for $q\in\Z^{*}$ this structure is not of the form $L_{0,-2q}\cdot L_{0,-2q}=\mu L_{0,-q}$. For $\delta=1$ (i.e., Poisson structures), Lemma~\ref{lem4} together with the description of derivations of $\B(q)$ would be needed instead of Corollaries~\ref{cor11} and~\ref{cor12}.
\end{remark}

\end{document}